\documentclass[review,3p,square,11pt]{elsarticle}

\usepackage{amssymb}
\usepackage{amsmath}
\usepackage{amsthm}
\usepackage{bbm}
\usepackage{enumitem}

\geometry{top=1in,bottom=1in,left=1in,right=1in}

\usepackage{fancyhdr}
\usepackage{hyperref}
\hypersetup{%
  pdftitle={BSDEs},%
  pdfsubject={BSDEs},%
  pdfauthor={ShengJun FAN, Ying Hu},%
  pdfkeywords={BSDEs},%
  pdfstartview=FitH,%
  CJKbookmarks=true,%
  bookmarksnumbered=true,%
  bookmarksopen=true,%
  colorlinks=true, linkcolor=blue, urlcolor=blue, citecolor=blue, %
}

\usepackage{cleveref}
\crefname{thm}{Theorem}{Theorems}
\crefname{pro}{Proposition}{Propositions}
\crefname{lem}{Lemma}{Lemmas}
\crefname{rmk}{Remark}{Remarks}
\crefname{cor}{Corollary}{Corollaries}
\crefname{dfn}{Definition}{Definitions}
\crefname{ex}{Example}{Examples}
\crefname{section}{Section}{Sections}
\crefname{subsection}{Subsection}{Subsections}

\newcommand{\eps}{\varepsilon}

\newcommand{\To}{\rightarrow}
\newcommand{\as}{{\rm d}\mathbb{P}\times{\rm d} t-a.e.}
\newcommand{\ass}{{\rm d}\mathbb{P}\times{\rm d} s-a.e.}
\newcommand{\ps}{\mathbb{P}-a.s.}

\newcommand{\F}{\mathcal{F}}
\newcommand{\E}{\mathbb{E}}

\newcommand{\s}{\mathcal{S}}

\newcommand{\mcal}{\mathcal{M}}

\newcommand{\hcal}{\mathcal{H}}

\newcommand{\T}{[0,T]}

\newcommand{\R}{{\mathbb R}}

\newcommand{\RE}{\forall}

\newcommand {\Dis}{\displaystyle}

\newtheorem{thm}{Theorem}[section]
\newtheorem{lem}[thm]{Lemma}

\newtheorem{rmk}[thm]{Remark}

\journal{}
\begin{document}
\begin{frontmatter}

\title{{Multi-dimensional backward stochastic differential equations of diagonally quadratic generators: the general result}\tnoteref{found}}
\tnotetext[found]{Shengjun Fan is supported by the State Scholarship Fund from the China Scholarship Council (No. 201806425013). Ying Hu is partially supported by Lebesgue center of mathematics ``Investissements d'avenir" program-ANR-11-LABX-0020-01, by CAESARS-ANR-15-CE05-0024 and by MFG-ANR-16-CE40-0015-01. Shanjian Tang is supported by National Science Foundation of China (No. 11631004).
\vspace{0.2cm}}


\author{Shengjun Fan\vspace{-0.5cm}\corref{cor1}}
\author{\ \ \ \ Ying Hu\corref{cor2}}
\author{\ \ \ \ Shanjian Tang$^\dag$\corref{cor3}}

\cortext[cor1]{\ School of Mathematics, China University of Mining and Technology, Xuzhou 221116, China. E-mail: f\_s\_j@126.com \vspace{0.2cm}}

\cortext[cor2]{\ Univ. Rennes, CNRS, IRMAR-UMR6625, F-35000, Rennes, France. E-mail: ying.hu@univ-rennes1.fr\vspace{0.2cm}}

\cortext[cor3]{${}^\dag$Department of Finance and Control Sciences, School of Mathematical Sciences, Fudan University, Shanghai 200433, China. E-mail: sjtang@fudan.edu.cn}

\begin{abstract}
This paper is devoted to a general solvability of a
multi-dimensional backward stochastic differential equation (BSDE) of a diagonally quadratic generator $g(t,y,z)$, by relaxing  the assumptions of \citet{HuTang2016SPA} on the generator and terminal value. More precisely, the generator $g(t,y,z)$ can have more general growth and continuity in $y$ in the local solution; while in the global solution,  the generator $g(t,y,z)$ can have  a  skew sub-quadratic but  in addition  ``strictly and diagonally" quadratic growth in   the second unknown variable $z$, or the terminal value can be unbounded but the generator $g(t,y,z)$  is ``diagonally dependent" on the second unknown variable $z$ (i.e., the $i$-th component $g^i$ of the generator $g$ only depends on the $i$-th row $z^i$ of the  variable $z$ for each  $i=1,\cdots,n$ ). Three new results are established on the local and global solutions  when the terminal value is bounded and  the generator $g$ is subject to some general assumptions. When  the terminal value is unbounded but  is of  exponential moments of arbitrary order, an existence and uniqueness result  is given under the assumptions that  the generator $g(t,y,z)$ is Lipschitz continuous in the first unknown variable $y$, and varies with the second unknown variable $z$  in a ``diagonal'' , ``component-wisely convex or concave",  and ``quadratically growing" way,  which seems to be the first  general solvability of
system of quadratic BSDEs with unbounded terminal values. This generalizes and strengthens some existing results via some new ideas.
\vspace{0.2cm}
\end{abstract}

\begin{keyword}
Multi-dimensional BSDE \sep diagonally quadratic generator \sep convex generator \sep \\ \hspace*{1.95cm}BMO martingale \sep unbounded terminal value. \vspace{0.2cm}

\MSC[2010] 60H10\vspace{0.2cm}
\end{keyword}

\end{frontmatter}
\vspace{-0.4cm}

\section{Introduction}
\label{sec:1-Introduction}
\setcounter{equation}{0}

Fix a terminal time $T\in (0,+\infty)$ and two positive integers $n$ and $d$.
Let $(B_t)_{t\in\T}$ be a $d$-dimensional standard Brownian motion defined on some complete probability space $(\Omega, \F, \mathbb{P})$, and $(\F_t)_{t\in\T}$ be the augmented natural filtration generated by the standard Brownian motion $B$.
Consider the following backward stochastic differential equation (BSDE in short):
\begin{equation}\label{eq:1.1}
  Y_t=\xi+\int_t^T g(s,Y_s,Z_s){\rm d}s-\int_t^T Z_s {\rm d}B_s, \ \ t\in\T,
\end{equation}
where the terminal value $\xi$ is an $\F_T$-measurable $n$-dimensional random vector, the generator function $g(\omega, t, y, z):\Omega\times\T\times\R^n\times\R^{n\times d}\To \R^n$
is $(\F_t)$-progressively measurable for each pair $(y,z)$, and the solution $(Y_t,Z_t)_{t\in\T}$ is a pair of $(\F_t)$-progressively measurable processes with values in $\R^n\times\R^{n\times d}$ which almost surely verifies BSDE \eqref{eq:1.1}.
The history of BSDEs (1.1) can be dated back to  \citet{Bismut1973JMAA} for the linear case, and to \citet{Bismut1976SICON} for a specifically structured matrix-valued nonlinear case where the matrix-valued generator contains a quadratic form of the second unknown.  In 1990, \citet{PardouxPeng1990SCL} established the existence and uniqueness result for a multidimensional ($n\ge 1$) nonlinear BSDE with a uniformly Lipschitz continuous generator. Subsequently, there has been an increasing interest in BSDEs with applications in various fields such as stochastic control, mathematical finance, partial differential equations (PDEs).

The class of BSDEs, with generators having a quadratic growth in the state variable $z$, has attracted a lot of attention in recent years. On the one hand, the existence and uniqueness theory is well developed in the scalar ($n=1$) case. \citet{Kobylanski2000AP} established the first existence and uniqueness result for scalar-valued quadratic BSDEs with bounded terminal values, and some subsequent intensive efforts can be founded in \citet{Tevzadze2008SPA}, \citet{BriandElie2013SPA}, \citet{Fan2016SPA} and \citet{LuoFan2018SD} for the bounded terminal value case, and in \citet{BriandHu2006PTRF,BriandHu2008PTRF},
\citet{DelbaenHuRichou2011AIHPPS,DelbaenHuRichou2015DCD}, \citet{BarrieuElKaroui2013AoP} and \citet{FanHuTang2019ArXiv} for the unbounded terminal value case. On the other hand, \citet{FreiDosReis2011MFE} constructed an example of multidimensional quadratic BSDE with a simple generator and a bounded terminal value to show that the equation might fail to have a global bounded solution on $\T$, which illustrates the difficulty of the quadratic part contributing to the
underlying scalar generator as an unbounded process. Moreover, it is well known that some tools used when $n=1$, like Girsanov's transform and monotone convergence, can no longer be applied when $n>1$ in most cases. Consequently, multidimensional quadratic BSDEs, the focus of the present paper, pose a great challenge. Solutions of multidimensional quadratic BSDEs with unbounded terminal values have been listed as an open problem in Peng~\cite[Section 5, page 270]{Peng1999}.

Nevertheless, motivated by their intrinsic mathematical interest and especially by diverse applications in various fields, such as nonzero-sum risk-sensitive stochastic differential games, financial price-impact models, financial market equilibrium problems for several interacting agents, and stochastic equilibria problems in incomplete financial markets, many scholars have studied systems of quadratic BSDEs in recent years. First of all, by the theory of BMO (bounded in mean oscillation) martingales and using the contract mapping argument, \citet{Tevzadze2008SPA} proved a general existence and uniqueness result for multi-dimensional quadratic BSDEs when the terminal value is small enough in the supremum norm, which has inspired subsequent works under some different types of ``smallness" assumptions on the terminal value and the generator, see for example \citet{Frei2014SPA}, \citet{Kardaras2015ArXiv}, \citet{JamneshanKupperLuo2017ECP}, \citet{KramkovPulido2016AAP, KramkovPulido2016SIAM} and \citet{HarterRichou2019EJP}. We also note that some different ideas and methods have been applied in these works mentioned above. Secondly, in the Markovian setting, \citet{cheridito2015Stochastics} proved the solvability for a special system of quadratic BSDEs, and \citet{XingZitkovic2018AoP} obtained, by virtue of analytic PDE methods, the global solvability for a large class of multidimensional quadratic BSDEs with weak regularity assumptions on the terminal value and the generator. Finally, by utilizing the Girsanov transform and adopting a distinct idea from the above works, which is to search for some sufficient conditions on the generator such that the corresponding system of quadratic BSDEs admits a (unique) local or global solution for any bounded terminal values rather than some certain terminal values, \citet{cheridito2015Stochastics}, \citet{HuTang2016SPA} and \citet{Luo2019ArXiv} respectively established several existence and uniqueness results of local and global solutions for systems of BSDEs with some structured quadratic generators. More specifically, \citet{cheridito2015Stochastics} investigated system of BSDEs with projectable quadratic generators and subquadratic generators, \citet{HuTang2016SPA} addressed a kind of multi-dimensional BSDEs with diagonally quadratic generators, in which the $i$th $(i=1,\cdots,n)$ component $g^i$ of the generator $g$ has a quadratic growth only on the $i$th row of the matrix $z$, and \citet{Luo2019ArXiv} considered a class of multi-dimensional BSDEs with triangularly quadratic generators. We would like to mention that all of these results mentioned in this paragraph are obtained under the bounded terminal value condition, and up to our best knowledge, in existing literature there seems to be no positive general solvability result on the system of quadratic BSDEs with unbounded terminal values.

The present paper is the continuation and extension of \citet{HuTang2016SPA}.
Under more general assumptions on the generator and the terminal condition than those used in \cite{HuTang2016SPA}, we are devoted to the general solvability of multidimensional diagonally quadratic BSDEs. A local solution is first constructed by virtue of uniform a priori estimates on the solution of scalar-valued BSDEs and the fixed-point argument, where the terminal value is bounded and the generator $g$ can have a general growth in the variable $y$. We note that a simpler and more direct idea than that used in the proof of \cite[ Theorem 2.2 ]{HuTang2016SPA} is used to obtain the radius of the centered ball within which the constructed mapping is stable. Then, by stitching local solutions we proved two existence and uniqueness results on global solution of system of diagonally quadratic BSDEs with bounded terminal values, where the generator $g$ needs to satisfy an additional one-sided growth condition with respect to the variable $y$. In particular, we eliminate the restriction condition used in \cite{HuTang2016SPA} that the $i$th component of the generator $g$ is bounded with respect to the $j$th ($j\neq i$) row of the matrix $z$ by imposing a strictly quadratic condition on the generator $g$ (see assumption \ref{A:H5} in Section \ref{sec:2-statement of main result}). Finally,
assuming that for each $i=1,\cdots,n$, the $i$th component $g^i$ of the generator $g$ is Lipschitz continuous in the state variable $y$, depends only on the $i$th row $z^i$ of the state variable $z$, and is either convex or concave with quadratic growth in $z^i$,  utilizing the iterative algorithm together with uniform a priori estimates and the $\theta$-method, we prove existence and uniqueness of the global solution to the multidimensional diagonally quadratic BSDE with the terminal value of exponential moments of arbitrary order, which seems to be the first result on the general solvability of system of quadratic BSDEs with unbounded terminal values.

The rest of the paper is organized as follows. In Section 2, we introduce some notations used later, and state the main results of this paper. In Sections 3-5,
we respectively prove our existence and uniqueness results on the local and global solution for our multi-dimensional diagonally quadratic BSDEs with bounded and unbounded terminal values. Finally, in the Appendix we present some auxiliary results for scalar-valued quadratic BSDEs with bounded and unbounded terminal values, including existence, uniqueness and several a priori estimates.

\section{Notations and statement of main results}
\label{sec:2-statement of main result}
\setcounter{equation}{0}

\subsection{Notations\vspace{0.1cm}}

Let $a\wedge b$ and $a\vee b$ be the minimum and maximum of two real numbers $a$ and $b$, respectively. Set $a^+:=a\vee 0$   and $a^-:=-(a\wedge 0)$. Denote by ${\bf 1}_A(\cdot)$ the indicator of set $A$, and ${\rm sgn}(x):={\bf 1}_{x>0}-{\bf 1}_{x\leq 0}$.

Throughout this paper, all the processes are assumed to be $(\F_t)_{t\in\T}$-progressively measurable, and
all equalities and inequalities between
random variables and processes are understood in the sense of $\ps$ and $\as$,  respectively. The Euclidean norm is always denoted by $|\cdot|$,  and $\|\cdot\|_{\infty}$ denotes the $L^{\infty}$-norm for one-dimensional or multidimensional random variable defined on the probability space $(\Omega, \F, \mathbb{P})$.

We define  the following four Banach spaces of stochastic processes.
By $\s^p(\R^n)$ for $p\geq 1$ , we denote the totality of  all $\R^n$-valued continuous adapted processes $(Y_t)_{t\in\T}$ such that
$$\|Y\|_{{\s}^p}:=\left(\E[\sup_{t\in\T} |Y_t|^p]\right)^{1/p}<+\infty.$$
By $\s^{\infty}(\R^n)$, we denote the totality of all $Y\in \bigcap_{p\geq 1}\s^p(\R^n)$ such that
$$\|Y\|_{{\s}^{\infty}}:=\mathop{\rm ess\ sup}_{(\omega,t)}|Y_t(\omega)| =\left\|\sup_{t\in\T} |Y_t| \right\|_{\infty}<+\infty.$$
By $\hcal^p(\R^{n\times d})$ for $p\geq 1$, we denote the totality of  all $\R^{n\times d}$-valued $(\F_t)_{t\in\T}$-progressively measurable processes $(Z_t)_{t\in\T}$ such that
$$
\|Z\|_{\hcal^p}:=\left\{\E\left[\left(\int_0^T |Z_s|^2{\rm d}s\right)^{p/2}\right] \right\}^{1/p}<+\infty.
$$
By ${\rm BMO}(\R^{n\times d})$, we denote the totality of all $Z\in \hcal^2(\R^{n\times d})$ such that
$$
\|Z\|_{\rm BMO}:=\sup_{\tau}\left\|\E_{\tau}\left[\int_{\tau}^T |Z_s|^2 {\rm d}s\right]\right\|_{\infty}^{1/2}<+\infty.
$$
Here and hereafter the supremum is taken over all $(\F_t)$-stopping times $\tau$ with values in $\T$, and $\E_{\tau}$ denotes the conditional expectation with respect to $\F_\tau$.

The spaces  $\s^p_{[a,b]}(\R^n)$, $\s^{\infty}_{[a,b]}(\R^n)$, $\hcal^p_{[a,b]}(\R^{n\times d})$,  and ${\rm BMO}_{[a,b]}(\R^{n\times d})$ are identically defined for stochastic processes over the time interval $[a,b]$. We note that for $Z\in {\rm BMO}(\R^{n\times d})$, the process $\int_0^t Z_s{\rm d}B_s, t\in\T$,  is an $n$-dimensional BMO martingale.  For the theory of BMO martingales, we refer the reader  to the monograph  \citet{Kazamaki1994book}.

For $i=1,\cdots, n$, denote by $z^i$, $y^i$  and $g^i$ respectively the $i$th row of matrix $z\in\R^{n\times d}$,  the $i$th component of the vector $y\in \R^n$ and the generator $g$.

Finally, we write $Y\in \mathcal{E}(\R^n)$ if
$$\exp{(|Y|)}\in \bigcap_{p\geq 1}\s^p(\R^n),$$
and $Z\in \mcal(\R^{n\times d})$ if
$$Z\in \bigcap_{p\geq 1}\hcal^p(\R^{n\times d}).\vspace{0.2cm}$$

\subsection{Statement of the main results}

Throughout the paper, we always fix an $(\F_t)$-progressively measurable scalar-valued non-negative process $(\alpha_t)_{t\in\T}$, a deterministic nondecreasing continuous function $\phi(\cdot):[0,+\infty)\To [0,+\infty)$ with $\phi(0)=0$ and several real constants $\beta\geq 0$, $0<\bar\gamma\leq \gamma$, $\lambda\geq 0$ and $\delta\in [0,1)$.

The first main result of this paper concerns local solutions for the bounded terminal value case. We need the following three assumptions.

\begin{enumerate}
\renewcommand{\theenumi}{(H\arabic{enumi})}
\renewcommand{\labelenumi}{\theenumi}
\item\label{A:H1} For $i=1,\cdots,n$, $g^i$ satisfies that $\as$, for each $(y,z)\in \R^n\times\R^{n\times d}$,
    $$
    |g^i(\omega,t,y,z)|\leq \alpha_t(\omega)+\phi(|y|)+\frac{\gamma}{2} |z^i|^2+\lambda \sum_{j\neq i} |z^j|^{1+\delta};
    $$
\item\label{A:H2} For $i=1,\cdots,n$, $g^i$ satisfies that $\as$, for each $(y,\bar y, z, \bar z)\in \R^n\times\R^n\times\R^{n\times d}\times\R^{n\times d}$,
    $$
    \begin{array}{ll}
    &\Dis |g^i(\omega,t,y,z)-g^i(\omega,t,\bar y,\bar z)| \vspace{0.1cm}\\
    &\Dis \leq \phi(|y|\vee|\bar y|) \Dis \left[ \left(1+|z|+|\bar z|\right)\left(|y-\bar y|+|z^i-\bar z^i|\right)+\left(1+|z|^\delta+|\bar z|^\delta\right)\sum_{j\neq i} |z^j-\bar z^j|\right];
    \end{array}
    $$

\item\label{A:H3} There exists two non-negative constants $C_1$ and $C_2$ such that
$$\|\xi\|_{\infty}\leq C_1\ \ \ \ {\rm and}\ \ \ \ \left\|\int_0^T \alpha_t{\rm d}t\right\|_{\infty}\leq C_2.$$

\end{enumerate}

In the first two assumptions~\ref{A:H1} and \ref{A:H2},  it creates no essential difference to replace  both terms $\sum_{j\neq i} |z^j|^{1+\delta}$ and $\sum_{j\neq i} |z^j-\bar z^j|$ with $|z|^{1+\delta}$ and $|z-\bar z|$,  respectively. The underlying way of formulation is more convenient for subsequent exposition.

\begin{thm}\label{thm:2.1}
Let assumptions \ref{A:H1}-\ref{A:H3} hold. Then, there exist a real $\eps>0$ (depending only on constants $(n,\gamma,\lambda,\delta,C_1,C_2)$ and function $\phi(\cdot)$) and a bounded subset $\mathcal{B}_\eps$ of the product space $\s^\infty_{[T-\eps,T]}(\R^n)\times {\rm BMO}_{[T-\eps,T]}(\R^{n\times d})$ such that BSDE \eqref{eq:1.1} has a unique local solution $(Y,Z)$ on the time interval $[T-\eps,T]$ with $(Y,Z)\in \mathcal{B}_\eps$.
\end{thm}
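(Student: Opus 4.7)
I would use Banach's fixed-point theorem on the product space $\s^\infty_{[T-\eps,T]}(\R^n)\times{\rm BMO}_{[T-\eps,T]}(\R^{n\times d})$, decoupling the $n$ components by freezing all the other rows. For radii $r_1,r_2>0$ to be tuned, set
$$
\mathcal{B}_\eps:=\{(U,V)\in\s^\infty_{[T-\eps,T]}(\R^n)\times{\rm BMO}_{[T-\eps,T]}(\R^{n\times d}):\ \|U\|_{\s^\infty}\leq r_1,\ \|V\|_{\rm BMO}\leq r_2\}.
$$
Given $(U,V)\in\mathcal{B}_\eps$, for each $i$ define the scalar frozen generator
$$
f^i(s,y,z):=g^i\bigl(s,\,U^1_s,\dots,U^{i-1}_s,y,U^{i+1}_s,\dots,U^n_s,\,V^1_s,\dots,V^{i-1}_s,z,V^{i+1}_s,\dots,V^n_s\bigr),
$$
which by \ref{A:H1} satisfies $|f^i(s,y,z)|\leq \tilde\alpha^i_s + \phi(r_1+|y|) + (\gamma/2)|z|^2$ with $\tilde\alpha^i_s:=\alpha_s + \lambda\sum_{j\neq i}|V^j_s|^{1+\delta}$. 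Solve the associated scalar quadratic BSDE on $[T-\eps,T]$ with terminal $\xi^i$ via the classical existence/uniqueness and a priori estimates collected in the Appendix to obtain $(Y^i,Z^i)\in\s^\infty\times{\rm BMO}$, and define $\Gamma(U,V):=(Y,Z)$. The fixed points of $\Gamma$ inside $\mathcal{B}_\eps$ are exactly the desired local solutions.

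\textbf{Stability of the ball.} The strict sub-quadratic exponent $1+\delta<2$ in the off-diagonal entries is what powers the argument: by the pathwise H\"older inequality and Jensen,
$$
\E_\tau\!\left[\int_\tau^T |V^j_s|^{1+\delta}\,{\rm d}s\right]\leq \eps^{(1-\delta)/2}\,\|V\|_{\rm BMO}^{1+\delta}\qquad\text{for every stopping time }\tau\in[T-\eps,T].
$$
Combining this with Young's inequality $\lambda|V^j|^{1+\delta}\leq \nu|V^j|^2+C_{\nu,\lambda,\delta}$ and John--Nirenberg applied to $\int V\,{\rm d}B$, the Kobyla\'nski-type exponential a priori bound for scalar quadratic BSDEs yields $\|Y^i\|_{\s^\infty}\leq \Psi(C_1,C_2,r_2,\eps)$ with $\Psi(\cdot)\downarrow C_1$ as $\eps\to 0^+$. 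The companion BMO estimate (via It\^o applied to $\exp(\gamma|Y^i|)-1-\gamma|Y^i|$) controls $\|Z^i\|_{\rm BMO}^2$ by a second $\eps$-vanishing expression. Setting $r_1:=2C_1+1$, choosing $r_2$ sufficiently large, and finally $\eps$ small, one secures $\Gamma(\mathcal{B}_\eps)\subset\mathcal{B}_\eps$ with constants depending only on $(n,\gamma,\lambda,\delta,C_1,C_2,\phi)$.

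\textbf{Contraction.} For two pairs $(U_k,V_k)\in\mathcal{B}_\eps$, $k=1,2$, with images $(Y_k,Z_k)$, \ref{A:H2} linearises the coordinate-wise differences: $\Delta Y^i_t+\int_t^T\Delta Z^i_s\,{\rm d}B_s=\int_t^T(a^i_s\Delta Y^i_s+\beta^i_s\cdot\Delta Z^i_s+\eta^i_s)\,{\rm d}s$, with $|a^i_s|+|\beta^i_s|\leq\phi(r_1)(1+|V_1|+|V_2|+|Z_1|+|Z_2|)\in{\rm BMO}$ and $|\eta^i_s|\leq\phi(r_1)\bigl[(1+|V_1|+|V_2|+|Z_1|+|Z_2|)|\Delta U|+(1+|V_1|^\delta+|V_2|^\delta+|Z_1|^\delta+|Z_2|^\delta)|\Delta V|\bigr]$. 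The BMO nature of $\beta^i$ legitimises a Girsanov change of measure; applying once more the H\"older gain $\eps^{(1-\delta)/2}$ to the sub-quadratic source in $\Delta V$ leads to
$$
\|\Delta Y\|_{\s^\infty}+\|\Delta Z\|_{\rm BMO}\leq \Lambda(r_1,r_2)\bigl(\eps\,\|\Delta U\|_{\s^\infty}+\eps^{(1-\delta)/2}\|\Delta V\|_{\rm BMO}\bigr),
$$
which is a strict contraction upon further reducing $\eps$. Banach's theorem supplies the unique fixed point $(Y,Z)\in\mathcal{B}_\eps$.

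\textbf{Main obstacle.} The delicate point is the stability step: since $\tilde\alpha^i$ is not essentially bounded in $\omega$ when $V\in{\rm BMO}$, the naive exponential a priori bound cannot be invoked directly, and the precise combination of Young's inequality, John--Nirenberg, and the strictly sub-quadratic exponent $1+\delta$ is required to keep $\|Y^i\|_{\s^\infty}$ finite and, crucially, to make its excess over $C_1$ vanish as $\eps\to 0^+$. The simplification announced in the introduction -- replacing the iterative scheme of \cite{HuTang2016SPA} by a single direct selection of $r_1,r_2$ -- lives precisely at this step.
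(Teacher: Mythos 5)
Your overall architecture --- a fixed point of a decoupled map on a small ball of $\s^\infty\times{\rm BMO}$ over a short terminal window, with Young's inequality plus John--Nirenberg absorbing the sub-quadratic off-diagonal terms and a Girsanov transform driving the contraction --- is exactly the paper's strategy. However, there are two concrete problems with the proposal as written.

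First, your decoupling keeps $y^i$ alive as an unknown of the $i$th scalar equation, whereas the paper freezes the \emph{entire} vector $y$ at $U_s$ and keeps only $z^i$ as an unknown: it solves the scalar BSDE with generator $g^i(s,U_s,V_s(Z^i_s;i))$, which depends on $(\omega,s,z)$ only. This matters. The auxiliary result you appeal to (\cref{lem:A.1}) is stated for generators $f(\omega,t,z)$ with no $y$-argument, while your frozen generator has growth $\phi(r_1+|y|)$ in $y^i$ with $\phi$ an arbitrary nondecreasing function --- possibly superlinear --- and a local Lipschitz modulus in $y^i$ of size $\phi(\cdot)(1+|z|+|\bar z|)$, which is unbounded. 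No ``classical'' scalar result covers this combination; you would need a separate truncation and a priori-bound argument just to solve the frozen equation, and in the contraction step the extra term $a^i_s\Delta Y^i_s$ with $a^i$ only BMO-controlled (not bounded) must be absorbed --- a complication the paper's full freezing of $y$ avoids entirely. Second, the claim that the stability bound $\Psi(C_1,C_2,r_2,\eps)$ decreases to $C_1$ as $\eps\to0^+$, and hence the choice $r_1:=2C_1+1$, is incorrect: even as $\eps\to0$ the exponential a priori estimate retains the contribution of $\left\|\int_0^T\alpha_s\,{\rm d}s\right\|_\infty\le C_2$ (which need not vanish uniformly in $\omega$ over short windows, since only the integral of $\alpha$ over $[0,T]$ is bounded) together with the John--Nirenberg constant $\frac{1}{\gamma}\ln 2$; the correct limiting radius is $K_1=\frac{n}{\gamma}\ln 2+n(C_1+C_2)$ and the ball must be taken of radius $2K_1$, as in the paper. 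Both defects are repairable --- freeze $y^i$ as well and enlarge $r_1$ --- but as written the existence step for the frozen scalar equation is not supported by the results you cite.
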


\begin{rmk}\label{rmk:2.2}
Assumptions (H1) and (H2) of \cref{thm:2.1} are more general than those of \citet[Theorem 2.2, p. 1072]{HuTang2016SPA} in that the former relaxes the growth and continuity of the generator in the first unknown variable $y$. For example,  the following generator $g$ satisfies the former,  while not the latter: $$g^i(\omega,t,y,z)=(|y|^2+\sin|z^i|)|z|+|z|^{3\over 2}+|z^i|^2,\ \ i=1,\cdots, n.$$
\end{rmk}

The second and third main results of this paper concern  global solutions of quadratic BSDEs with  bounded terminal values. The following two assumptions are further required.

\begin{enumerate}
\renewcommand{\theenumi}{(H4)}
\renewcommand{\labelenumi}{\theenumi}
\item\label{A:H4} For $i=1,\cdots,n$, $g^i$ satisfies that $\as$, for each $(y,z)\in \R^n\times\R^{n\times d}$,
    $$
    {\rm sgn}(y^i)g^i(\omega,t,y,z)\leq \alpha_t(\omega)+\beta |y|+\lambda |z|^{1+\delta}+\frac{\gamma}{2} |z^i|^2;
    $$

\renewcommand{\theenumi}{(H5)}
\renewcommand{\labelenumi}{\theenumi}
\item\label{A:H5} For $i=1,\cdots,n$, it holds that $\as$, for each $(y,z)\in \R^n\times\R^{n\times d}$,
    \begin{equation}\label{eq:2.1}
    g^i(\omega,t,y,z)\geq  \frac{\bar \gamma}{2} |z^i|^2-\alpha_t(\omega)-\beta |y|-\lambda |z|^{1+\delta}
    \end{equation}
    or
    \begin{equation}\label{eq:2.2}
    g^i(\omega,t,y,z)\leq  -\frac{\bar \gamma}{2} |z^i|^2+\alpha_t(\omega)+\beta |y|+\lambda |z|^{1+\delta}.\vspace{0.2cm}
    \end{equation}
\end{enumerate}

\begin{rmk}\label{rmk:2.3} Assumption \ref{A:H5} holds for the generator $g$ if some components of $g$ satisfy \eqref{eq:2.1}, and the others satisfy \eqref{eq:2.2}.
\end{rmk}

\begin{thm}\label{thm:2.4}
Let assumptions \ref{A:H1}-\ref{A:H4} be satisfied. If the constant $\lambda$ in \ref{A:H4} vanishes, then BSDE \eqref{eq:1.1} admits a unique global solution $(Y,Z)\in \s^\infty(\R^n)\times {\rm BMO}(\R^{n\times d})$ on $\T$.
\end{thm}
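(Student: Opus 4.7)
The plan is to glue the local solutions given by \cref{thm:2.1} into a global one, anchored by a uniform a priori $\s^\infty$-bound on $Y$ that will depend only on the data $(n,\beta,\gamma,T,C_1,C_2)$ and crucially not on $\|Z\|_{\rm BMO}$ or the length of the current subinterval. This independence is made possible precisely by $\lambda = 0$ in \ref{A:H4}.

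First I would derive the a priori $\s^\infty$-bound on $Y$. Given any solution $(Y,Z) \in \s^\infty_{[t_0,T]}(\R^n) \times {\rm BMO}_{[t_0,T]}(\R^{n\times d})$ of BSDE \eqref{eq:1.1} on a subinterval $[t_0,T]$, the $i$-th component $Y^i$ is a scalar BSDE driven by the single row $Z^i$. With $\lambda=0$, \ref{A:H4} reads
$${\rm sgn}(Y^i_s)\,g^i(s,Y_s,Z_s) \leq \alpha_s + \beta|Y_s| + \frac{\gamma}{2}|Z^i_s|^2,$$
involving only $|Z^i|$ quadratically. The scalar quadratic BSDE a priori estimate from the Appendix (equivalently, Kobylanski's exponential transformation: It\^o applied to $e^{\gamma|Y^i_s|}$ produces $\tfrac{\gamma^2}{2}e^{\gamma|Y^i|}|Z^i|^2$, which exactly cancels $\gamma e^{\gamma|Y^i|}\cdot\tfrac{\gamma}{2}|Z^i|^2$ from the drift bound) then yields
$$|Y^i_t| \leq \frac{1}{\gamma}\log \E_t\!\left[\exp\!\left(\gamma|\xi^i| + \gamma\jf(\alpha_s + \beta|Y_s|)\,{\rm d}s\right)\right] \quad \ps$$
Using $|\xi^i|\leq C_1$, $\int_0^T \alpha_s\,{\rm d}s\leq C_2$, and $|Y_s|\leq\sqrt n\,\max_i|Y^i_s|$, taking essential supremum over $\omega$ and applying Gronwall's lemma yields
$$\|Y\|_{\s^\infty_{[t_0,T]}}\leq M := \sqrt n\,(C_1+C_2)\,e^{\sqrt n\,\beta T},$$
which is independent of $t_0$ and of $\|Z\|_{\rm BMO}$.

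Next I would derive the a priori ${\rm BMO}$-bound on $Z$. With $|Y|\leq M$ in hand, applying It\^o's formula to $e^{2\gamma|Y^i_s|}$ and using \ref{A:H1} with $\phi(|Y|)\leq\phi(M)$: the quadratic $\tfrac{\gamma}{2}|Z^i|^2$ is absorbed by the exponential factor while the sub-quadratic cross-term $\lambda|Z^j|^{1+\delta}$ is controlled by Young's inequality, giving
$$\|Z\|_{{\rm BMO}_{[t_0,T]}}\leq K(n,\gamma,\lambda,\delta,\phi,M,T,C_2).$$
Then I would glue: let $\eps^\ast=\eps^\ast(n,\gamma,\lambda,\delta,M,C_2,\phi)>0$ be the local-existence length produced by \cref{thm:2.1} with $C_1$ replaced by $M$, and partition $[0,T]$ as $0=t_N<t_{N-1}<\cdots<t_0=T$ with mesh at most $\eps^\ast$. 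On $[t_1,T]$, \cref{thm:2.1} supplies the unique local solution $(Y,Z)$; Step 1 forces $|Y_{t_1}|\leq M$, so \cref{thm:2.1} reapplies on $[t_2,t_1]$ with new terminal $Y_{t_1}$, and iterating concatenates a global $(Y,Z)\in\s^\infty(\R^n)\times{\rm BMO}(\R^{n\times d})$ on $\T$. Uniqueness then follows by iterating the local uniqueness of \cref{thm:2.1} backward along the same partition, since any two global solutions satisfy the a priori bounds of Steps 1--2 and hence each restriction lies in the bounded ball $\mathcal{B}_{\eps^\ast}$ where \cref{thm:2.1} provides uniqueness.

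The hard part will be Step 1: producing an $\s^\infty$-bound on $Y$ that is free of $\|Z\|_{\rm BMO}$. This is precisely what $\lambda=0$ in \ref{A:H4} secures, because only then does the sign-dominated inequality for $g^i$ involve just the single row $Z^i$ quadratically, allowing Kobylanski's exponential cancellation to decouple component by component. Were $\lambda>0$, the cross-term $\lambda|Z|^{1+\delta}$ in \ref{A:H4} would entangle all rows of $Z$ in the component-wise scalar estimate and the Gronwall argument would fail to close --- this is exactly why the stronger ``strictly and diagonally quadratic'' condition \ref{A:H5} is imposed in the companion global existence theorem for $\lambda>0$.
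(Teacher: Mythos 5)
Your proposal is correct and follows essentially the same route as the paper: the uniform a priori $\s^\infty$-bound on $Y$ obtained from the scalar exponential-transformation estimate (which is exactly \cref{lem:4.1}(i), in the Gronwall form of \cref{rmk:4.2}, and is precisely where $\lambda=0$ in \ref{A:H4} is used), followed by concatenating the local solutions of \cref{thm:2.1} over a partition of uniform mesh, which is the Cheridito--Nam gluing argument the paper invokes. The only place you are slightly more optimistic than warranted is the uniqueness step, where showing that an arbitrary global solution's restriction actually lands in the specific ball $\mathcal{B}_{\eps^*}$ requires enlarging the ball (and shrinking $\eps^*$) to accommodate the a priori BMO bound of Step 2; this is standard and is likewise left implicit in the paper.
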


\begin{thm}\label{thm:2.5}
Let assumptions \ref{A:H1}-\ref{A:H5} hold. Then BSDE \eqref{eq:1.1} admits a unique global solution $(Y,Z)\in \s^\infty(\R^n)\times {\rm BMO}(\R^{n\times d})$ on $\T$.
\end{thm}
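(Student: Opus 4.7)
The strategy is to combine the local existence result of \cref{thm:2.1} with uniform a priori bounds on $\|Y\|_{\s^\infty}$ and $\|Z\|_{\rm BMO}$, and then stitch local solutions backward from $T$. Compared with \cref{thm:2.4}, the new difficulty is that the subquadratic cross term $\lambda|z|^{1+\delta}$ appearing in \ref{A:H4} is no longer negligible; it is controlled by exploiting the strictly quadratic diagonal structure furnished by \ref{A:H5}.

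For the a priori BMO bound on $Z$, I would take the conditional expectation in the $i$th scalar component of BSDE \eqref{eq:1.1} to obtain $Y^i_t=\E_t[\xi^i+\int_t^T g^i(s,Y_s,Z_s)\,ds]$, and then insert the lower (resp.\ upper) bound in \eqref{eq:2.1} (resp.\ \eqref{eq:2.2}) according to which inequality of \ref{A:H5} is satisfied by the component~$i$. Summing over $i$ and applying Young's inequality $\lambda|z|^{1+\delta}\leq\eps|z|^2+C_\eps$ with $\eps$ small enough to absorb the residual BMO term yields an estimate of the form $\|Z\|_{\rm BMO}^2\leq A\|Y\|_{\s^\infty}+B$, with constants $A,B$ depending only on the data.

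For the a priori $\s^\infty$ bound on $Y$, I would use \ref{A:H4} by applying the Meyer-Tanaka formula to the transform $\Psi^i_t:=\exp(\gamma|Y^i_t|-\gamma\int_t^T\alpha_s\,ds)$. The factor $\exp(-\gamma\int_t^T\alpha_s\,ds)$ cancels the $\alpha_s$ drift in \ref{A:H4}, while $e^{\gamma|Y^i|}$ cancels the $\frac{\gamma}{2}|Z^i|^2$ term from \ref{A:H4} against the quadratic-variation contribution in It\^o's formula; the local-time part is nonnegative and can be dropped. Taking the conditional expectation, using $\Psi^i_s\leq e^{\gamma|Y^i_s|}$ and $e^{\gamma|Y^i_t|}\leq e^{\gamma C_2}\Psi^i_t$, and then plugging Young on $\lambda|z|^{1+\delta}$ together with the BMO bound just obtained produces on any sub-interval of length $h$ an implicit inequality of the form $e^{\gamma M_h}\bigl[1-\gamma e^{\gamma C_2}(P(h)M_h+Q(h))\bigr]\leq e^{\gamma(C_1+C_2)}$, where $M_h:=\|Y\|_{\s^\infty_{[\tau,T]}}$ and $P(h),Q(h)$ can be made arbitrarily small by taking $\eps$ and $h$ small. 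A bootstrap then closes the inequality, and iterating over a finite partition of $[0,T]$ produces universal bounds $\|Y\|_{\s^\infty}\leq M_0$ and $\|Z\|_{\rm BMO}\leq N_0$.

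With these uniform a priori bounds in hand, \cref{thm:2.1} applied with $C_1$ replaced by $M_0$ delivers a common local radius $\eps_0>0$; partitioning $[0,T]$ into sub-intervals of length at most $\eps_0$ and solving BSDE \eqref{eq:1.1} backward from $T$, using $Y_{T-k\eps_0}$ (which remains bounded by $M_0$ owing to the a priori estimate) as the terminal value at the $k$th stage, yields a global solution in $\s^\infty(\R^n)\times{\rm BMO}(\R^{n\times d})$. Uniqueness follows from the local uniqueness in \cref{thm:2.1} together with the a priori confinement of any solution to a fixed $\s^\infty\times{\rm BMO}$-ball. The main obstacle is the coupling between the two a priori bounds when $\lambda>0$: the exponential transform for $\|Y\|_{\s^\infty}$ inevitably introduces a $\|Z\|_{\rm BMO}^2$ term on its right-hand side, so closing the implicit inequality requires simultaneously exploiting the smallness of $\eps$ in Young and that of the time interval, before iterating over a finite partition to cover all of $[0,T]$.
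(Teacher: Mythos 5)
Your overall architecture---local existence from \cref{thm:2.1}, uniform a priori estimates, and backward stitching of local solutions following Cheridito--Nam---is exactly the paper's, and both your BMO estimate for $Z$ and the final stitching/uniqueness step are sound. The gap sits in the derivation of the uniform $\s^\infty$ bound on $Y$, precisely at the point you yourself flag as ``the main obstacle''. Your $Z$-estimate only gives $\|Z\|_{{\rm BMO}_{[t,T]}}^2\leq A M+B$ with $M:=\|Y\|_{\s^{\infty}_{[t,T]}}$, so when you feed it into the exponential transform you are forced to take the Young parameter $\eps\lesssim 1/M$ (otherwise $\eps\,\E_\tau[\int_\tau^T|Z_s|^2{\rm d}s]\leq\eps(AM+B)$ is not small), whence $C_\eps\sim\eps^{-(1+\delta)/(1-\delta)}\sim M^{(1+\delta)/(1-\delta)}$ is superlinear in $M$. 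The resulting implicit inequality then closes only on sub-intervals of length $h\lesssim M^{-(1+\delta)/(1-\delta)}$. Since the running bound $R_k$ on $Y$ grows at least linearly in the number $k$ of sub-intervals already treated, the admissible lengths $h_k$ form a summable sequence whenever $\delta>0$, and the backward iteration need not reach $t=0$: the ``finite partition of $[0,T]$'' you invoke does not exist in general. Replacing pointwise Young by John--Nirenberg does not help, because the admissible exponent in $\E_\tau[\exp(p\int_\tau^T|Z_s|^2{\rm d}s)]<+\infty$ also shrinks like $1/(AM+B)$.

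What is missing is a qualitatively stronger use of \ref{A:H5}: not a bound on the BMO norm of $Z$, but a conditional exponential moment bound with a \emph{fixed} exponent, namely $\E_t[\exp(\frac{\bar\gamma\eps_0}{2n}\int_t^T|Z_s|^2{\rm d}s)]\leq\exp(C+C'\eps_0 M)$, where $\eps_0$ depends only on $(\bar\gamma,\gamma,\beta,T)$ and can be tuned so that the coefficient of $M$ is below $\gamma/2$. This is \cref{lem:A.2}(ii), obtained by exponentiating the pathwise inequality $\frac{\bar\gamma}{2}\int_t^T|Z^i_s|^2{\rm d}s\leq\pm(Y^i_t-\xi^i)+\int_t^T(\alpha_s+\beta|Y_s|+\lambda|Z_s|^{1+\delta}){\rm d}s\mp\int_t^T Z^i_s{\rm d}B_s$ and absorbing the stochastic integral for $\eps_0$ small relative to $\bar\gamma$. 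With a fixed exponent one may apply Young with a \emph{fixed} $\eps$, so $C_\eps$ stays bounded, and combining with the representation of \cref{lem:A.2}(i) (drift kept inside the exponent rather than pulling out $e^{\gamma M}$) yields the \emph{linear} recursion \eqref{eq:4.13}, $\|Y\|_{\s^{\infty}_{[t,T]}}\leq 2n(C_1+C_5)+2n\beta\|Y\|_{\s^{\infty}_{[t,T]}}(T-t)$, which closes on intervals of the fixed length $1/(4n\beta)$ no matter how large the running bound is; finitely many steps then cover $[0,T]$, which is \cref{lem:4.1}(ii). Once that uniform bound is secured, the remainder of your argument goes through as you describe.
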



\begin{rmk}\label{rmk:2.6}
Assumption \ref{A:H4} is some kind of one-sided linear growth condition of the generator $g$ with respect to the variable $y$, and assumption \ref{A:H5} can be regarded as some kind of strictly quadratic condition of $g^i$ with respect to $z^i$. A generator $g$ satisfying assumptions \ref{A:H1}-\ref{A:H5} can still have a general growth in the variable $y$. For example,  the following generator $g$ satisfies all these assumptions:
$$g^i(\omega,t,y,z)=({\rm e}^{-y^i}+\cos|z^i|)|z|-|z|^{4\over 3}+(-1)^i|z^i|^2,\ \ i=1,\cdots, n. \vspace{-0.1cm}$$
Note that this $g$ does not satisfy the corresponding assumptions used
in \citet{HuTang2016SPA}.
\end{rmk}

For the sake of studying global solutions of multidimensional diagonally quadratic BSDEs with unbounded terminal values, we introduce the following four assumptions on the data $(g, \xi)$ of BSDEs.

\begin{enumerate}
\renewcommand{\theenumi}{(B\arabic{enumi})}
\renewcommand{\labelenumi}{\theenumi}
\item\label{A:B1} For $i=1,\cdots,n$, $g^i(\omega,t, y, z)$ varies with $(\omega, t, y)$ and the $i$th row $z^i$ of the  matrix $z\in \R^{n\times d}$ only, and  grows linearly in $y$ and quadratically in $z^i$, i.e., $\as$,
    $$
    |g^i(\omega,t,y,z)|\leq \alpha_t(\omega)+\beta |y|+\frac{\gamma}{2}|z|^2 \quad \hbox{for each $(y, z)\in \R^n\times\R^{1\times d}$};\vspace{-0.1cm}
    $$

\item\label{A:B2} $g$ is uniformly Lipschitz continuous in $y$, i.e., $\as$,
    $$
    |g(\omega,t,y,z)-g(\omega,t,\bar y, z)|\leq \beta |y-\bar y| \quad \hbox{ for each $(y, \bar y, z)\in (\R^n)^2\times\R^{1\times d}$};\vspace{-0.1cm}
    $$

\item\label{A:B3} $\as$, for each $i=1,\cdots,n$ and $y\in \R^n$, $g^i(\omega,t,y,\cdot)$ is either convex or concave;

\item\label{A:B4}  The terminal value $\xi$ is of exponential moments of arbitrary order as well as $\int_0^T \alpha_t {\rm d}t$. That is, we have for each $p\geq 1$,
$$
\E\left[\exp\left\{p\left(|\xi|+\int_0^T \alpha_t {\rm d}t\right)\right\}\right]<+\infty.\vspace{0.2cm}
$$
\end{enumerate}

\begin{rmk}\label{rmk:2.7}
Assumption \ref{A:B3} holds for the generator $g$ if some components of $g$ are convex in $z$, and the others are concave in $z$.
\end{rmk}

The following theorem seems to be the first result on the general solvability of systems of quadratic BSDEs with unbounded terminal values, and  constitutes the last main result of the  paper.

\begin{thm}\label{thm:2.8}
Let assumptions \ref{A:B1}-\ref{A:B4} be in force. Then BSDE \eqref{eq:1.1} admits a unique global solution $(Y,Z)\in \mathcal{E}(\R^n)\times \mcal(\R^{n\times d})$ on $\T$.
\end{thm}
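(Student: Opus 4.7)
The plan is to construct the global solution of \eqref{eq:1.1} by a Picard-type iteration exploiting the diagonal dependence granted by \ref{A:B1}. Given $Y^{(k)}\in\mathcal{E}(\R^n)$, solve for each $i=1,\ldots,n$ the scalar BSDE
\begin{equation*}
Y^{i,(k+1)}_t = \xi^i + \int_t^T g^i(s, Y^{(k)}_s, Z^{i,(k+1)}_s)\,ds - \int_t^T Z^{i,(k+1)}_s\,dB_s,
\end{equation*}
or, more convenient for a priori estimates, the variant in which $g^i$ is evaluated at $g^i(s,[Y^{(k)}_{-i,s},Y^{i,(k+1)}_s], Z^{i,(k+1)}_s)$, so that the scalar driver is Lipschitz in $y^i$ with constant $\beta$. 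By \ref{A:B3} this scalar generator is convex or concave in $z$ with quadratic growth $\gamma/2$, with driver bound involving $\alpha_s+\beta|Y^{(k)}_s|$; by \ref{A:B4} and the induction hypothesis this bound admits exponential moments of all orders, so the scalar theory of \citet{BriandHu2006PTRF,BriandHu2008PTRF} (recalled in the appendix) yields a unique $(Y^{i,(k+1)}, Z^{i,(k+1)})$. Starting from $Y^{(0)}\equiv 0$ defines the sequence $\{(Y^{(k)},Z^{(k)})\}$.

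The crux is to obtain uniform-in-$k$ a priori bounds. Applying the exponential-transform estimate with the time-weight $e^{\beta(T-t)}$ to each scalar BSDE gives schematically, for every $p\geq 1$,
\begin{equation*}
\exp\bigl(p\,e^{\beta(T-t)}|Y^{i,(k+1)}_t|\bigr) \leq \E_t\Bigl[\exp\Bigl(p\,e^{\beta T}\bigl(|\xi^i| + \int_t^T \alpha_s\,ds + \beta\int_t^T|Y^{(k)}_s|\,ds\bigr)\Bigr)\Bigr],
\end{equation*}
which still couples consecutive iterates through the Lipschitz constant $\beta$. When $\beta T$ is small enough relative to the exponential moments of the data, this closes directly by Cauchy--Schwarz and induction; for general $T$ I would split $\T$ into finitely many short subintervals on which the iteration contracts in $\s^p$ for every $p$, solve backwards interval by interval (each time using the solution at the interval boundary as the new terminal, which inherits arbitrary-order exponential moments from the scalar a priori bound), and stitch. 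Either route yields $k$-uniform bounds of $\{Y^{(k)}\}$ in $\mathcal{E}(\R^n)$ and $\{Z^{(k)}\}$ in $\mcal(\R^{n\times d})$.

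Convergence of $\{(Y^{(k)}, Z^{(k)})\}$ is handled by the $\theta$-method applied component-wise. For $\theta\in(0,1)$ and indices $k > m$, set $U^{i,\theta}:=Y^{i,(k+1)}-\theta Y^{i,(m+1)}$ (with the roles of $k$ and $m$ swapped for concave components). Convexity of $g^i$ in $z^i$ together with Lipschitz of $g^i$ in $y$ yields an upper bound of the generator difference of the form
\begin{equation*}
g^i(s, Y^{(k)}, Z^{i,(k+1)}) - \theta g^i(s, Y^{(m)}, Z^{i,(m+1)}) \leq \eta^{i,\theta}_s\cdot(Z^{i,(k+1)}-\theta Z^{i,(m+1)}) + \beta|Y^{(k)}-\theta Y^{(m)}| + (1-\theta)R^{i,\theta}_s,
\end{equation*}
where $\eta^{i,\theta}$ has integrability inherited from the uniform bounds on $Z^{(k)}$, $Z^{(m)}$, and $R^{i,\theta}$ is dominated by those bounds. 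A Gronwall-type argument in $\s^p$ (using H\"older to absorb $\eta^{i,\theta}$), followed by $\theta\to 1$ and induction in $(k,m)$, gives the Cauchy property of $\{Y^{(k)}\}$ in $\s^p$ and $\{Z^{(k)}\}$ in $\hcal^p$ for every $p\geq 1$; the limit $(Y, Z)$ lies in $\mathcal{E}(\R^n)\times\mcal(\R^{n\times d})$ by Fatou combined with the uniform bounds, and solves \eqref{eq:1.1} by passage to the limit. Uniqueness is obtained by applying the same $\theta$-argument directly to two candidate solutions.

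The main obstacle is the uniform a priori estimate: the $\beta$-Lipschitz coupling on $y$ makes each iterate's exponential bound depend on a higher exponential moment of the previous one, so assumption \ref{A:B4} on \emph{all}-order exponential moments must be exploited either through a tight time-weighted Gronwall or through short-interval stitching to prevent the bounds from blowing up when $T$ is large. A related technical difficulty is to ensure that the linearisation coefficient $\eta^{i,\theta}$ arising in the $\theta$-method has enough integrability to justify the Gronwall step, given that $\{Z^{(k)}\}$ is only in $\mcal(\R^{n\times d})$ rather than in BMO.
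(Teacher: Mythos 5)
Your overall architecture coincides with the paper's: the same Picard iteration decoupled by \ref{A:B1}, uniform-in-$k$ exponential moment bounds obtained by splitting $\T$ into subintervals of length of order $1/\beta$ and stitching (the paper uses length $1/(2n\beta)$, Doob's maximal inequality and H\"older at each step), the $\theta$-method for the Cauchy property, and uniqueness by the same $\theta$-argument applied to two solutions. The construction of the iterates and the uniform a priori estimates are sound and essentially identical to the paper's Section 5.

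The genuine gap is in the convergence step, and it is precisely the difficulty you flag at the end without resolving. Your bound on the generator difference introduces a linearisation coefficient $\eta^{i,\theta}_s$ multiplying $Z^{i,(k+1)}-\theta Z^{i,(m+1)}$, to be ``absorbed by H\"older'' in a Gronwall argument. But $\eta^{i,\theta}$ grows like $|Z^{i,(k+1)}|+|Z^{i,(m+1)}|$, and since the iterates $Z^{(k)}$ are only in $\mcal(\R^{n\times d})$ and not in BMO, neither a Girsanov change of measure nor a direct $\s^p$ Gronwall closes: the stochastic integral of $\eta^{i,\theta}$ against the difference of the $Z$'s is not controllable by the quantities you iterate on. This is exactly the obstruction that made the unbounded-terminal-value case open. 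The paper's resolution is to \emph{not} linearise: convexity is used only to obtain the one-sided quadratic bound
$\delta_\theta g^{(m,p);i}(s,z)\leq \alpha_s+\beta|\delta_\theta Y^{(m-1,p)}_s|+2\beta|Y^{(m-1)}_s|+\tfrac{\gamma}{2}|z|^2$,
in which the quadratic term is in the \emph{new} unknown $z$ of the difference equation rather than in the old iterates; the exponential-transform estimate of \cref{lem:A.4} (which needs no change of measure, only exponential integrability of the additive part, guaranteed by \eqref{eq:5.3}) then bounds $\exp\bigl(\gamma(\delta_\theta Y^{(m,p);i})^+\bigr)$. The negative part is controlled through the symmetric quantity $\delta_\theta\widetilde Y^{(m,p)}=(Y^{(m)}-\theta Y^{(m+p)})/(1-\theta)$ via $(\delta_\theta Y^{(m,p);i})^-\leq(\delta_\theta\widetilde Y^{(m,p);i})^+ +2|Y^{(m+p)}_t|$, after which Doob, H\"older and the same interval-splitting induction give \eqref{eq:5.20}, and letting $\theta\to 1$ yields the Cauchy property in $\s^q$; the $Z$-convergence then follows from It\^o's formula and the uniform $\hcal^q$ bounds. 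If you replace your linearisation by this one-sided bound plus \cref{lem:A.4}, your argument goes through.
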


\begin{rmk}\label{rmk:2.9}
 In the case of unbounded terminal values,   the martingale part of the first known process $Y$ goes beyond the space of BMO martingales, and  some delicate and technical computations are developed in the proof of \cref{thm:2.8}, in which a priori estimates on one-dimensional quadratic BSDEs, the $\theta$-method for convex functions and Doob's maximal inequality for martingales play a  crucial role.
\end{rmk}

\section{Local solution with bounded terminal value: proof of \cref{thm:2.1}}
\label{sec:3-local solution}
\setcounter{equation}{0}

For each $i=1,\cdots,n$, $H\in \R^{n\times d}$ and $z\in \R^{1\times d}$, define by $H(z;i)$ the matrix in $\R^{n\times d}$ whose $i$th row is $z$ and whose $j$th row is $H^j$ for any $j\neq i$.

Let assumptions \ref{A:H1}-\ref{A:H3} hold. For a pair of processes $(U,V)\in \s^\infty(\R^n)\times {\rm BMO}(\R^{n\times d})$,  we consider the following decoupled system of quadratic BSDEs:
\begin{equation}\label{eq:3.1}
  Y_t^i=\xi^i+\int_t^T g^i(s,U_s,V_s(Z_s^i;i)){\rm d}s-\int_t^T Z_s^i {\rm d}B_s, \ \ t\in\T;\  \ i=1,\cdots, n.
\end{equation}
For each fixed $i=1,\cdots,n$, in view of assumptions \ref{A:H1} and \ref{A:H2}, it is not difficult to verify that $\as$, for each $(z, \bar z)\in (\R^{1\times d})^2$,
$$
|g^i(t,U_t,V_t(z;i))|\leq \alpha_t+\phi(|U_t|)+n\lambda |V_t|^{1+\delta}+\frac{\gamma}{2} |z|^2
$$
and
$$
|g^i(t,U_t,V_t(z;i))-g^i(t,U_t,V_t(\bar z;i))| \leq \phi(|U_t|)\left(1+2|V_t|+|z|+|\bar z|\right) |z-\bar z|.\vspace{0.1cm}
$$
This means that the generator $g^i(t,U_t,V_t(z;i))$ satisfies assumptions \ref{A:A1} and \ref{A:A2} defined in Appendix. Then, in view of assumption \ref{A:H3}, it follows from \cref{lem:A.1} that for each $i=1,\cdots,n$, one-dimensional BSDE with the terminal value $\xi^i$ and the generator $g^i(t,U_t,V_t(z;i))$ has a unique solution $(Y^i,Z^i)$ such that $Y^i$ is (essentially) bounded and $Z^i\cdot B:=\left(\int_0^t Z_s^i{\rm d}B_s\right)_{t\in\T}$ is a BMO martingale. That is to say, the system of BSDEs
\eqref{eq:3.1} has a unique solution $(Y,Z)\in \s^\infty(\R^n)\times {\rm BMO}(\R^{n\times d})$.\vspace{0.2cm}

Now, define the solution map $\Gamma: (U,V)\mapsto \Gamma(U,V)$ as follows:
$$
\Gamma(U,V):=(Y,Z),\ \ \ \RE\ (U,V)\in \s^\infty(\R^n)\times {\rm BMO}(\R^{n\times d}).
$$
It is a transformation in the space $\s^\infty(\R^n)\times {\rm BMO}(\R^{n\times d})$. Moreover, it follows from \cref{lem:A.1} that for each $i=1,\cdots,n$, $t\in\T$ and stopping time $\tau$ with values in $[t,T]$,
$$
\begin{array}{lll}
|Y_t^i|&\leq & \Dis {1\over \gamma}\ln 2+\|\xi^i\|_{\infty}+\left\|\int_0^T \alpha_s{\rm d}s\right\|_{\infty}\vspace{0.1cm}\\
&&\Dis +\phi\left(\|U\|_{\s^{\infty}_{[t,T]}}\right)(T-t)
+\gamma^{\frac{1+\delta}{1-\delta}}C_{\delta,\lambda,n} \|V\|_{{\rm BMO}_{[t,T]}}^{2\frac{1+\delta}{1-\delta}}(T-t)
\end{array}
$$
and
$$
\begin{array}{lll}
&& \Dis\E_\tau\left[\int_\tau^T |Z_s^i|^2{\rm d}s\right]\vspace{0.1cm}\\
&\leq &\Dis {1\over \gamma^2} \exp(2\gamma \|\xi^i\|_\infty)+{1\over \gamma}\exp\left(2\gamma \left\|\sup_{s\in [t,T]}|Y_s^i|\right\|_\infty\right)\vspace{0.2cm}\\
&& \cdot \Dis\left(1+2\left\|\int_0^T \alpha_s {\rm d}s\right\|_{\infty}+2\phi\left(\|U\|_{\s^{\infty}_{[t,T]}}\right)(T-t)
+2C_{\delta,\lambda,n}\|V\|_{{\rm BMO}_{[t,T]}}^{2\frac{1+\delta}{1-\delta}}(T-t)\right),\vspace{0.1cm}
\end{array}
$$
where the constant $C_{\delta,\lambda,n}$ is defined in \eqref{eq:A.4} of Appendix. Therefore, in view of assumption \ref{A:H3}, for each $t\in\T$, we have
\begin{equation}\label{eq:3.2}
\begin{array}{lll}
\|Y\|_{\s^{\infty}_{[t,T]}} &\leq & \Dis {n\over \gamma}\ln 2+n(C_1+C_2)\vspace{0.1cm}\\
&&\Dis +n\phi\left(\|U\|_{\s^{\infty}_{[t,T]}}\right)(T-t)
+n\gamma^{\frac{1+\delta}{1-\delta}}C_{\delta,\lambda,n} \|V\|_{{\rm BMO}_{[t,T]}}^{2\frac{1+\delta}{1-\delta}}(T-t)
\end{array}
\end{equation}
and
\begin{equation}\label{eq:3.3}
\begin{array}{lll}
\Dis\|Z\|_{{\rm BMO}_{[t,T]}}^2 &\leq &\Dis {n\over \gamma^2} \exp(2\gamma C_1)+{n\over \gamma}\exp\left(2\gamma \|Y\|_{\s^{\infty}_{[t,T]}}\right)\vspace{0.2cm}\\
&& \cdot \Dis\left(1+2C_2+2\phi\left(\|U\|_{\s^{\infty}_{[t,T]}}\right)(T-t)
+2C_{\delta,\lambda,n}\|V\|_{{\rm BMO}_{[t,T]}}^{2\frac{1+\delta}{1-\delta}}(T-t)\right).
\end{array}
\end{equation}
Define
$$
K_1:={n\over \gamma}\ln 2+n(C_1+C_2),
$$
$$
K_2:={n\over \gamma^2} \exp(2\gamma C_1)+{n\over \gamma}\exp\left(4\gamma K_1\right)(1+2C_2)
$$
and
$$
\eps_0:=\left(\frac{K_1}{n\phi\left(2K_1\right)
+n\gamma^{\frac{1+\delta}{1-\delta}}C_{\delta,\lambda,n} (2K_2)^{\frac{1+\delta}{1-\delta}}}\right) \bigwedge \left(\frac{{\gamma\over n}\exp\left(-4\gamma K_1\right)K_2}{2\phi\left(2K_1\right)
+2C_{\delta,\lambda,n} (2K_2)^{\frac{1+\delta}{1-\delta}}}\right)>0.\vspace{0.3cm}
$$
By virtue of \eqref{eq:3.2} and \eqref{eq:3.3}, we can verify directly that for each $\eps\in (0,\eps_0]$, if
$$\|U\|_{\s^{\infty}_{[T-\eps,T]}}\leq 2K_1\ \ \ {\rm and}\ \ \  \|V\|_{{\rm BMO}_{[T-\eps,T]}}^2\leq 2K_2,$$
then
$$\|Y\|_{\s^{\infty}_{[T-\eps,T]}}\leq 2K_1\ \ \ {\rm and}\ \ \  \|Z\|_{{\rm BMO}_{[T-\eps,T]}}^2\leq 2K_2.$$
This means that
$$
\Gamma(U,V)\in \mathcal{B}_\eps,\ \ \RE\ (U,V)\in \mathcal{B}_\eps,
$$
where
\begin{equation}\label{eq:3.4}
\begin{array}{l}
\Dis\mathcal{B}_\eps:=\left\{(U,V)\in \s^\infty(\R^n)\times {\rm BMO}(\R^{n\times d}):\right.\vspace{0.1cm}\\
\Dis \hspace{1.5cm} \left.\|U\|_{\s^{\infty}_{[T-\eps,T]}}\leq 2K_1\ \ {\rm and}\ \   \|V\|_{{\rm BMO}_{[T-\eps,T]}}^2\leq 2K_2\right\}\vspace{0.2cm}
\end{array}
\end{equation}
is a Banach space with the following norm
$$
\|(U,V)\|_{\mathcal{B}_\eps}:=\sqrt{\|U\|_{\s^{\infty}_{[T-\eps,T]}}^2+\|V\|_{{\rm BMO}_{[T-\eps,T]}}^2},\ \ \ \RE\ (U,V)\in \mathcal{B}_\eps.
$$
That is, the mapping $\Gamma$ is stable in the Banach space $\mathcal{B}_\eps$ for each $\eps\in (0,\eps_0]$.

It remains to show that there exists a real $\eps\in (0,\eps_0]$ depending only on constants $(n,\gamma,\lambda,\delta,C_1,C_2)$ and function $\phi(\cdot)$ such that $\Gamma$ is a contraction in $\mathcal{B}_\eps$. Indeed, for any fixed $\eps\in (0,\eps_0]$, and $(U,V)\in \mathcal{B}_{\eps}$ and $(\widetilde U,\widetilde V)\in \mathcal{B}_{\eps}$, we set
$$
(Y,Z):=\Gamma(U,V),\ \ \ \ (\widetilde Y,\widetilde Z):=\Gamma(\widetilde U,\widetilde V).
$$
That is, for $i=1,\cdots,n$ and $t\in\T$,
$$
\begin{array}{l}
\Dis Y_t^i=\xi^i+\int_t^T g^i(s,U_s,V_s(Z_s^i;i)){\rm d}s-\int_t^T Z_s^i {\rm d}B_s, \vspace{0.2cm}\\
\Dis \widetilde Y_t^i=\xi^i+\int_t^T g^i(s,\widetilde U_s,\widetilde V_s(\widetilde Z_s^i;i)){\rm d}s-\int_t^T \widetilde Z_s^i {\rm d}B_s.
\end{array}
$$
Define for $i=1,\cdots,n$ and $s\in\T$,
$$
\Delta_s^{1,i}:=g^i(s,U_s,V_s(Z_s^i;i))-g^i(s,U_s,V_s(\widetilde Z_s^i;i)), \ \  \Delta_s^{2,i}:=g^i(s,U_s,V_s(\widetilde Z_s^i;i))-g^i(s,\widetilde U_s,\widetilde V_s(\widetilde Z_s^i;i)).
$$
Then,
\begin{equation}\label{eq:3.5}
\begin{array}{ll}
 \Dis \Dis Y_t^i-\widetilde Y_t^i+\int_t^T \left(Z_s^i-\widetilde Z_s^i \right) {\rm d}B_s-\int_t^T \Delta_s^{1,i}{\rm d}s
=\Dis \int_t^T\Delta_s^{2,i} {\rm d}s,\ \ \ \ t\in\T.
\end{array}
\end{equation}
It follows from assumption \ref{A:H2} that $\ass$, for each $i=1,\cdots,n$,
\begin{equation}\label{eq:3.6}
|\Delta_s^{1,i}|\leq \phi(|U_s|)\left(1+2|V_s|+|Z_s|+|\widetilde Z_s|\right)|Z_s^i-\widetilde Z_s^i|
\end{equation}
and
\begin{equation}\label{eq:3.7}
\begin{array}{lll}
\Dis |\Delta_s^{2,i}|&\leq &\Dis \phi(|U_s|\vee |\widetilde U_s|)\left[\left(1+|V_s|+|\widetilde V_s|+|Z_s|+|\widetilde Z_s|\right)|U_s-\widetilde U_s|
\right.\vspace{0.1cm}\\
&& \Dis \hspace{2.6cm}+\left.\sqrt{n}\left(1+|V_s|^{\delta}+|\widetilde V_s|^{\delta}+2|\widetilde Z_s|^{\delta}\right)|V_s-\widetilde V_s|\right].\vspace{0.1cm}
\end{array}
\end{equation}
For $i=1,\cdots,n$, by \eqref{eq:3.6} we can define the $\R^d$-valued process $G(i)$ in an obvious way such that
\begin{equation}\label{eq:3.8}
\Delta_s^{1,i}=\left(Z_s^i-\widetilde Z_s^i \right)G_s(i)\ \ {\rm and}\ \  |G_s(i)|\leq \phi(|U_s|)\left(1+2|V_s|+|Z_s|+|\widetilde Z_s|\right),\ \ \ s\in\T.
\end{equation}
Finally, in view of the fact that all pairs of processes $(U,V), (\widetilde U,\widetilde V)$ and $(Y,Z), (\widetilde Y,\widetilde Z)$ are in $\mathcal{B}_{\eps}$ together with definition \eqref{eq:3.4} of $\mathcal{B}_{\eps}$ and inequalities \eqref{eq:3.5}-\eqref{eq:3.8}, using Girsanov's transform, H\"{o}lder's inequality, the energy inequality for BMO martingale and Lemma A.4 in \citet{HuTang2016SPA} we can follow the argument in pages 1078-1079 of \citet{HuTang2016SPA} to get the desired conclusion. \cref{thm:2.1} is then proved.

\section{Global solution with bounded terminal value: proof of \cref{thm:2.4,thm:2.5}}
\label{sec:4-global solution}
\setcounter{equation}{0}

We need the following lemma.

\begin{lem}\label{lem:4.1}
Let assumption \ref{A:H3} hold. Assume that for some $h\in (0,T]$, the BSDE \eqref{eq:1.1} has a solution $(Y,Z)\in \s^{\infty}_{[T-h,T]}\times {\rm BMO}_{[T-h,T]}$ on time interval $[T-h,T]$. We have

(i) If the generator $g$ satisfies assumption \ref{A:H4} with $\lambda=0$, then
$$
\|Y\|_{\s^{\infty}_{[T-h,T]}}\leq (2n)^{[2n\beta T]+2} C_1+\left(2n+(2n)^2+\cdots+(2n)^{[2n\beta T]+2}\right)C_2,
$$
where and hereafter $[x]$ denotes the maximum of integers smaller than or equal to $x$.

(ii) If the generator $g$ satisfies assumptions \ref{A:H4} and \ref{A:H5}, then
$$
\|Y\|_{\s^{\infty}_{[T-h,T]}}\leq (4n)^{[4n\beta T]+2} C_1+\left(4n+(4n)^2+\cdots+(4n)^{[4n\beta T]+2}\right)C_5,
$$
where $C_5$ is a positive constant depending only on $(n,\beta,\gamma,\bar\gamma,\lambda,\delta,T,C_2)$.
\end{lem}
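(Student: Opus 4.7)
The idea is to reduce the multidimensional BSDE on a short sub-interval to $n$ coupled scalar quadratic BSDEs by freezing the off-diagonal components of $Y$ and the off-diagonal rows of $Z$, apply a one-dimensional $L^\infty$ a priori estimate from the Appendix on each piece, and iterate this bound backward across a finite partition of $[T-h,T]$.

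For part (i), fix $i\in\{1,\ldots,n\}$ and a sub-interval $[t_1,t_2]\subset[T-h,T]$, and view $Y^i$ as the solution of the scalar BSDE with terminal value $Y^i_{t_2}$ and generator
$$\tilde g^i_s(y,z):=g^i\bigl(s,Y^1_s,\ldots,Y^{i-1}_s,y,Y^{i+1}_s,\ldots,Y^n_s,Z_s(z;i)\bigr),$$
using the row-substitution notation $Z_s(z;i)$ introduced at the start of Section~3. Combining \ref{A:H4} with $\lambda=0$ and the elementary bound $|y'|\le |y|+|Y_s|$ applied to the $n$-vector $y'$ obtained from $Y_s$ by replacing its $i$-th entry with the scalar $y$, one arrives at
$${\rm sgn}(y)\,\tilde g^i_s(y,z)\leq \alpha_s+\beta\|Y\|_{\s^\infty_{[t_1,t_2]}}+\beta|y|+\tfrac{\gamma}{2}|z|^2.$$
The appropriate scalar $L^\infty$ a priori estimate from the Appendix then yields an inequality of the form
$$\|Y^i\|_{\s^\infty_{[t_1,t_2]}}\leq \|Y^i_{t_2}\|_\infty+\Bigl\|\int_{t_1}^{t_2}\alpha_s\,ds\Bigr\|_\infty+\beta(t_2-t_1)\|Y\|_{\s^\infty_{[t_1,t_2]}}.$$
Summing over $i$ and choosing the mesh $\tau:=t_2-t_1$ small enough that $n\beta\tau\le 1/2$ (e.g.\ $\tau=1/(2n\beta)$) lets me absorb the self-referential $\|Y\|$-term and gives $\|Y\|_{\s^\infty_{[t_1,t_2]}}\le 2n\|Y_{t_2}\|_\infty+2nC_2$. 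Partitioning $[T-h,T]$ into $m\le[2n\beta T]+2$ such sub-intervals and iterating backward from $t_0=T$ produces the geometric series $(2n)^m C_1+\sum_{k=1}^{m}(2n)^k C_2$, which is precisely the announced bound.

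For part (ii), the term $\lambda|z|^{1+\delta}$ in \ref{A:H4} is no longer absent and must be controlled by the strictly quadratic bound supplied by \ref{A:H5}. I would first derive, on each sub-interval, a local BMO estimate of the form $\|Z\|_{{\rm BMO}_{[t_1,t_2]}}^2\leq C\bigl(1+\|Y\|_{\s^\infty_{[t_1,t_2]}}\bigr)$ by applying It\^o's formula to $\exp(c|Y^i_t|)$ with sign of $c$ dictated by whether \eqref{eq:2.1} or \eqref{eq:2.2} is the relevant option of \ref{A:H5} for component $i$; the $\frac{\bar\gamma}{2}|z^i|^2$ part of \ref{A:H5} cancels with the $\frac{c^2}{2}|Z^i|^2$ quadratic-variation term from It\^o. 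Young's inequality $\lambda|z|^{1+\delta}\le \eta|z|^2+C_\eta$ then turns the sub-quadratic term into a small quadratic piece (absorbed by the same cancellation) plus a deterministic constant lumped into the new source $C_5$. The remainder of the iteration mirrors part (i), but the coupled BMO/sup-$L^\infty$ control forces a slightly smaller mesh, which produces $(4n)$ and $[4n\beta T]+2$ in place of $(2n)$ and $[2n\beta T]+2$.

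The main obstacle is the self-referential appearance of $\|Y\|_{\s^\infty_{[t_1,t_2]}}$ on the right-hand side of the scalar bound after freezing: this forces the mesh $\tau$ to scale like $1/(n\beta)$, which is exactly what generates the exponents $[2n\beta T]+2$ and $[4n\beta T]+2$. In part (ii) an additional difficulty is that the BMO bound on $Z$ itself references $\|Y\|$, so both the sup-$L^\infty$ and BMO estimates must be closed simultaneously; this is the source of the halved admissible mesh and of the factor $(4n)$ replacing $(2n)$.
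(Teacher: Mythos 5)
Your part (i) is essentially the paper's own argument: the key inequality $\|Y\|_{\s^{\infty}_{[t_1,t_2]}}\leq n\|Y_{t_2}\|_{\infty}+nC_2+n\beta(t_2-t_1)\|Y\|_{\s^{\infty}_{[t_1,t_2]}}$ (the paper's (4.2), obtained by applying the scalar estimate of Lemma A.2(i) to the drift $g^i(\cdot,Y_\cdot,Z_\cdot)$ viewed as an exogenous process), the mesh $1/(2n\beta)$, and the backward geometric iteration all coincide with Section 4.

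Part (ii) has the right skeleton but a genuine gap in how the self-referential estimate is closed. You propose a first-moment bound $\|Z\|_{{\rm BMO}_{[t_1,t_2]}}^{2}\leq C(1+\|Y\|_{\s^{\infty}_{[t_1,t_2]}})$ and then claim the Young remainder of $\lambda|z|^{1+\delta}$ becomes ``a deterministic constant lumped into $C_5$'', absorbed ``by the same cancellation''. Two problems. First, the quantity that actually enters the $L^\infty$ bound on $Y^i$ is the exponential moment $\E_t[\exp(\lambda\gamma\int_t^T|Z_s|^{1+\delta}{\rm d}s)]$ (see (4.1)); the quadratic term $\frac{\gamma}{2}|z^i|^2$ of \ref{A:H4} has already been consumed by the It\^o correction of $e^{\gamma|Y^i|}$, so there is no quadratic-variation term left to absorb the small quadratic piece produced by Young. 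Controlling that exponential moment requires exponential integrability of $\int|Z|^2$ at a \emph{fixed} order; a BMO bound of the form you state yields this (via John--Nirenberg) only below a threshold that itself depends on $\|Y\|_{\s^\infty}$ --- precisely the quantity being estimated --- so the argument as written is circular, and letting the Young parameter $\eta$ shrink with $\|Y\|$ makes $C_\eta\sim\|Y\|^{2/(1-\delta)}$, which is superlinear and cannot be absorbed. The paper instead proves (Lemma A.2(ii) together with (4.7)--(4.10)) the bound $\E_t[\exp(\frac{\bar\gamma\eps_0}{2n}\int_t^T|Z_s|^2{\rm d}s)]\leq\exp(C+6\eps_0(2+\beta T)\|Y\|_{\s^{\infty}_{[t,T]}})$, with the order $\frac{\bar\gamma\eps_0}{2n}$ fixed and the right-hand side only exponentially \emph{linear} in $\|Y\|$. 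Second, plugging this in leaves a term $\frac{6\eps_0(2+\beta T)}{\gamma}\|Y\|_{\s^{\infty}_{[t,T]}}$ whose coefficient does \emph{not} shrink with the mesh; it is absorbed only because $\eps_0$ is chosen $\leq\gamma/(12(\beta T+2))$ so that this coefficient is at most $1/2$. That absorption is what doubles the recursion constant from $n$ to $2n$ (hence mesh $1/(4n\beta)$ and the factor $4n$); it is not merely ``a slightly smaller mesh''. Your sketch omits both the fixed-order exponential moment estimate and the choice of $\eps_0$, and without them the step fails.
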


\begin{proof}
For $i=1,\cdots,n$, it follows from assumption \ref{A:H4} that
$$
{\rm sgn} (Y_t^i(\omega))g^i\left(\omega,t,Y_t(\omega),Z_t(\omega)\right)\leq \alpha_t(\omega)+\beta |Y_t(\omega)|+\lambda |Z_t(\omega)|^{1+\delta}+\frac{\gamma}{2} |Z_t^i(\omega)|^2,\ \ t\in [T-h,T],
$$
which means that the generator of $i$th equation in system of BSDEs \eqref{eq:1.1} satisfies \eqref{eq:A.8} in Appendix (by letting $f(\omega,t,z)\equiv g^i\left(\omega,t,Y_t(\omega),Z_t(\omega)\right)$). It then follows from (i) of \cref{lem:A.2} and assumption \ref{A:H3} that for each $i=1,\cdots,n$,
\begin{equation}\label{eq:4.1}
\begin{array}{lll}
\Dis \exp\left(\gamma |Y_t^i|\right)&\leq &\Dis \exp\left(\gamma(C_1+C_2 )+\beta\gamma \|Y\|_{\s^{\infty}_{[t,T]}}(T-t)\right)\vspace{0.2cm}\\
&&\Dis\  \cdot\E_t\left[\exp\left(\lambda\gamma \int_t^T |Z_s|^{1+\delta} {\rm d}s\right)\right],\ \ \ \ t\in [T-h,T].\vspace{0.2cm}
\end{array}
\end{equation}

(i) In the case of $\lambda=0$, it follows from \eqref{eq:4.1} that for $i=1,\cdots,n$,
$$
|Y_t^i|\leq C_1+C_2+\beta \|Y\|_{\s^{\infty}_{[t,T]}}(T-t),\ \ t\in [T-h,T].
$$
Therefore,
\begin{equation}\label{eq:4.2}
\|Y\|_{\s^{\infty}_{[t,T]}}\leq n(C_1+C_2)+n\beta \|Y\|_{\s^{\infty}_{[t,T]}}(T-t),\ \ t\in [T-h,T].\vspace{0.2cm}
\end{equation}

For $\beta=0$, it is clear that
\begin{equation}\label{eq:4.3}
\|Y\|_{\s^{\infty}_{[T-h,T]}}\leq n(C_1+C_2).
\end{equation}
Otherwise, let $m_0$ is the unique positive integer satisfying
$$
T-h \in [T-m_0\eps,T-(m_0-1)\eps)
$$
or, equivalently,
\begin{equation}\label{eq:4.4}
2n\beta h={h\over \eps}\leq m_0 <{h\over \eps}+1\leq 2n\beta T+1
\end{equation}
with
$$
\eps:=\frac{1}{2n\beta}>0.\vspace{0.1cm}
$$
If $m_0=1$, then $n\beta(T-t)\leq n\beta h\leq n\beta\eps=1/2$ for $t\in [T-h,T]$, and it follows from \eqref{eq:4.2} that
$$
\|Y\|_{\s^{\infty}_{[t,T]}}\leq 2n(C_1+C_2),\ \ t\in [T-h,T],
$$
which yields that
\begin{equation}\label{eq:4.5}
\|Y\|_{\s^{\infty}_{[T-h,T]}}\leq 2n(C_1+C_2).\vspace{0.1cm}
\end{equation}
If $m_0=2$, then $n\beta(T-t)\leq n\beta\eps=1/2$ for $t\in [T-\eps,T]$, and it follows from \eqref{eq:4.2} that
$$
\|Y\|_{\s^{\infty}_{[t,T]}}\leq 2n(C_1+C_2),\ \ t\in [T-\eps,T],
$$
which yields that
\begin{equation}\label{eq:4.6}
\|Y_{T-\eps}\|_{\infty}\leq \|Y\|_{\s^{\infty}_{[T-\eps,T]}}\leq 2n(C_1+C_2).
\end{equation}
Now, consider the following system of BSDEs
$$
  Y_t=Y_{T-\eps}+\int_t^{T-\eps} g(s,Y_s,Z_s){\rm d}s-\int_t^{T-\eps} Z_s {\rm d}B_s, \ \ t\in [T-h,T-\eps].
$$
In view of \eqref{eq:4.6}, identically as in obtaining \eqref{eq:4.5} , we have
$$
\|Y\|_{\s^{\infty}_{[T-h,T-\eps]}}\leq 2n\left(2n(C_1+C_2)+C_2\right),
$$
and therefore,
$$
\|Y\|_{\s^{\infty}_{[T-h,T]}}\leq (2n)^2 C_1+\left(2n+(2n)^2\right)C_2.\vspace{0.1cm}
$$
Proceeding the above computation gives that if $m_0$ satisfies \eqref{eq:4.4}, then
$$
\|Y\|_{\s^{\infty}_{[T-h,T]}}\leq (2n)^{m_0} C_1+\left(2n+(2n)^2+\cdots+(2n)^{m_0}\right)C_2,
$$
which together with \eqref{eq:4.4} and \eqref{eq:4.3} yields assertion (i)  immediately.\vspace{0.2cm}

(ii) For $i=1,\cdots,n$, it follows from assumption \ref{A:H5} that
$$
g^i\left(\omega,t,Y_t(\omega),Z_t(\omega)\right)\geq \frac{\bar\gamma}{2} |Z_t^i(\omega)|^2-\alpha_t(\omega)-\beta |Y_t(\omega)|-\lambda |Z_t(\omega)|^{1+\delta}\vspace{-0.1cm}
$$
or\vspace{-0.1cm}
$$
g^i\left(\omega,t,Y_t(\omega),Z_t(\omega)\right)\leq -\frac{\bar\gamma}{2} |Z_t^i(\omega)|^2+\alpha_t(\omega)+\beta |Y_t(\omega)|+\lambda |Z_t(\omega)|^{1+\delta},
$$
which means that the generator of $i$th equation in system of BSDEs \eqref{eq:1.1} satisfies \eqref{eq:A.9} or \eqref{eq:A.10} in Appendix (by letting $f(\omega,t,z)\equiv g^i\left(\omega,t,Y_t(\omega),Z_t(\omega)\right)$). It then follows from (ii) of \cref{lem:A.2} and assumption \ref{A:H3} that for each $i=1,\cdots,n$,
$$
\begin{array}{ll}
    &\Dis \E_t\left[\exp\left(\frac{\bar \gamma}{2}\eps_0 \int_t^T |Z_s^i|^2{\rm d}s \right)\right]\vspace{0.2cm}\\
    \leq & \Dis\E_t\left[\exp\left(6\eps_0 \left\|\sup_{s\in [t,T]}|Y_s^i|\right\|_{\infty}+3\eps_0 \left\|\int_0^T \alpha_s {\rm d}s\right\|_{\infty}+3\eps_0 \beta T \|Y\|_{\s^{\infty}_{[t,T]}}+3\eps_0 \lambda \int_t^T |Z_s|^{1+\delta} {\rm d}s\right)\right]
    \vspace{0.2cm}\\
    \leq &\Dis \exp\left(3\eps_0  C_2+3\eps_0 (2+\beta T)\|Y\|_{\s^{\infty}_{[t,T]}} \right) \E_t\left[\exp\left(3\eps_0 \lambda \int_t^T |Z_s|^{1+\delta} {\rm d}s\right)\right],\ \ \ t\in [T-h,T],
\end{array}
$$
where
$$
\eps_0:=\left(\frac{\bar\gamma}{9}\right)\bigwedge \left(\frac{\gamma}{12(\beta T+2)}\right)>0.\vspace{0.1cm}
$$
Thus, by H\"{o}lder's inequality we get that for $t\in [T-h,T]$,\vspace{0.1cm}
\begin{equation}\label{eq:4.7}
\begin{array}{ll}
    &\Dis \E_t\left[\exp\left(\frac{\bar \gamma\eps_0}{2n} \int_t^T |Z_s|^2{\rm d}s \right)\right]\vspace{0.2cm}\\
    \leq &\Dis \exp\left(3\eps_0  C_2+3\eps_0 (2+\beta T)\|Y\|_{\s^{\infty}_{[t,T]}} \right) \E_t\left[\exp\left(3\eps_0 \lambda \int_t^T |Z_s|^{1+\delta} {\rm d}s\right)\right].
\end{array}
\end{equation}
Note by Young's inequality that for each pair of $a,b>0$,
\begin{equation}\label{eq:4.8}
ab^{1+\delta}=\left(\left(\frac{1+\delta}{2}\right)^{\frac{1+\delta}{1-\delta}} a^{\frac{2}{1-\delta}}\right)^{\frac{1-\delta}{2}}
\left(\frac{2}{1+\delta}b^2\right)^{\frac{1+\delta}{2}}\leq b^2+\frac{1-\delta}{2}\left(\frac{1+\delta}{2}\right)^{\frac{1+\delta}{1-\delta}} a^{\frac{2}{1-\delta}}.
\end{equation}
By letting $a=12n\lambda/\bar\gamma$ and $b=|Z_s|$ in \eqref{eq:4.8}, we have
\begin{equation}\label{eq:4.9}
3\eps_0\lambda |Z_s|^{1+\delta}=\frac{\bar\gamma\eps_0}{4n}\left(\frac{12n\lambda}{\bar\gamma} |Z_s|^{1+\delta}\right)\leq \frac{\bar\gamma\eps_0}{4n} |Z_s|^2+ C_3,\ \ \ s\in\T,
\end{equation}
where
$$
C_3:=\frac{\bar\gamma\eps_0(1-\delta)}{8n}\left(\frac{1+\delta}{2}\right)^{\frac{1+\delta}{1-\delta}} \left(\frac{12n\lambda}{\bar\gamma}\right)^{\frac{2}{1-\delta}}.\vspace{0.2cm}
$$
Coming back to \eqref{eq:4.7}, by \eqref{eq:4.9} and H\"{o}lder's inequality we deduce that for $t\in [T-h,T]$,
\begin{equation}\label{eq:4.10}
\E_t\left[\exp\left(\frac{\bar \gamma\eps_0}{2n} \int_t^T |Z_s|^2{\rm d}s \right)\right]\leq \exp\left(6\eps_0 C_2+ 2C_3 T +6\eps_0 (2+\beta T)\|Y\|_{\s^{\infty}_{[t,T]}} \right).\vspace{0.2cm}
\end{equation}

On the other hand, it follows from \eqref{eq:4.1} and Jensen's inequality  that
\begin{equation}\label{eq:4.11}
\begin{array}{lll}
\Dis \exp\left(\gamma |Y_t|\right)&\leq &\Dis \exp\left(n\gamma(C_1+C_2 )+n\beta\gamma \|Y\|_{\s^{\infty}_{[t,T]}}(T-t)\right)\vspace{0.1cm}\\
&&\Dis \ \cdot\E_t\left[\exp\left(n\lambda\gamma \int_t^T |Z_s|^{1+\delta} {\rm d}s\right)\right],\ \ \ \ t\in [T-h,T].
\end{array}
\end{equation}
By letting $a=2 n^2\lambda\gamma/\bar\gamma\eps_0$ and $b=|Z_s|$ in \eqref{eq:4.8}, we have
\begin{equation}\label{eq:4.12}
n\lambda \gamma |Z_s|^{1+\delta}=\frac{\bar\gamma\eps_0}{2n}\left(\frac{2 n^2\lambda\gamma}{\bar\gamma\eps_0} |Z_s|^{1+\delta}\right)\leq \frac{\bar\gamma\eps_0}{2n} |Z_s|^2+ C_4,\ \ \ s\in\T,
\end{equation}
where
$$
C_4:=\frac{\bar\gamma\eps_0(1-\delta)}{4n}\left(\frac{1+\delta}{2}
\right)^{\frac{1+\delta}{1-\delta}} \left(\frac{2 n^2\lambda\gamma}{\bar\gamma\eps_0}\right)^{\frac{2}{1-\delta}}.\vspace{0.2cm}
$$
Combining \eqref{eq:4.10}-\eqref{eq:4.12} yields that
$$
\begin{array}{lll}
|Y_t|& \leq &\Dis n(C_1+C_2 )+{C_4 T\over \gamma}+\frac{6\eps_0 C_2+ 2C_3 T}{\gamma} +\frac{6\eps_0 (2+\beta T)}{\gamma}\|Y\|_{\s^{\infty}_{[t,T]}}\vspace{0.1cm}\\
&&\Dis +n\beta \|Y\|_{\s^{\infty}_{[t,T]}}(T-t)
,\ \ \ \ \ \ t\in [T-h,T].
\end{array}
$$
And, from the definition of $\eps_0$ it follows that
\begin{equation}\label{eq:4.13}
\|Y\|_{\s^{\infty}_{[t,T]}} \leq \Dis 2n(C_1+C_5)+2n\beta \|Y\|_{\s^{\infty}_{[t,T]}}(T-t),\ \ \ \ t\in [T-h,T],
\end{equation}
where
$$
C_5:=C_2+\frac{6\eps_0 C_2+ 2C_3 T}{n\gamma} +\frac{C_4 T}{n\gamma}.\vspace{0.1cm}
$$

Finally, observing that \eqref{eq:4.13} is almost the same as \eqref{eq:4.2}, we can use the same computation as in (i) to obtain the desired conclusion of (ii). The proof of \cref{lem:4.1} is then complete.
\end{proof}

\begin{rmk}\label{rmk:4.2}
Observe that the term $\|U\|_{\s^{\infty}_{[t,T]}}(T-t)$ in the conclusion (i) of \cref{lem:A.1} can be replaced with $\int_t^T \|U\|_{\s^{\infty}_{[s,T]}}{\rm d}s$. Then the term $\|Y\|_{\s^{\infty}_{[t,T]}}(T-t)$ in inequalities \eqref{eq:4.1} and \eqref{eq:4.2} can be replaced with $\int_t^T \|Y\|_{\s^{\infty}_{[s,T]}}{\rm d}s$.
Therefore, by Gronwall's inequality, we obtain the following better upper bound under the assumptions of (i) in \cref{lem:4.1}:
$$
\|Y\|_{\s^{\infty}_{[T-h,T]}}\leq n(C_1+C_2)\exp\left(n\beta h\right)\leq n(C_1+C_2)\exp\left(n\beta T\right).
$$
The same computation yields the following estimate under the assumptions of (ii) in \cref{lem:4.1}:
$$
\|Y\|_{\s^{\infty}_{[T-h,T]}}\leq 2n(C_1+C_5)\exp\left(2n\beta h\right)\leq 2n(C_1+C_5)\exp\left(2n\beta T\right).\vspace{0.2cm}
$$
\end{rmk}

Using \cref{thm:2.1} and \cref{lem:4.1}, we can follow the proof of  \citet[Theorem 4.1]{cheridito2015Stochastics} to derive our  \cref{thm:2.4,thm:2.5}. All the details are omitted here.

\section{Global solution with unbounded terminal value: proof of \cref{thm:2.8}}
\label{sec:5-global solution unbounde terminal value}
\setcounter{equation}{0}

Let assumptions \ref{A:B1}-\ref{A:B4} be in force. For a pair of processes $(U,V)\in \mathcal{E}(\R^n)\times \mcal (\R^{n\times d})$,  we consider the following decoupled system of quadratic BSDEs:
\begin{equation}\label{eq:5.1}
  Y_t^i=\xi^i+\int_t^T g^i(s,U_s,Z_s^i){\rm d}s-\int_t^T Z_s^i {\rm d}B_s, \ \ t\in\T;\  \ i=1,\cdots, n.
\end{equation}
For each fixed $i=1,\cdots,n$, in view of assumptions \ref{A:B1} and \ref{A:B3}, it is clear that $\as$,
$$
|g^i(t,U_t,z)|\leq \alpha_t+\beta |U_t|+\frac{\gamma}{2} |z|^2,\ \ \RE\ z\in\R^{1\times d},
$$
and $g^i(t,U_t,z)$ is convex or concave in $z$. Furthermore, in view of assumption \ref{A:B4} and the fact that $U\in \mathcal{E}(\R^n)$, by H\"{o}lder's inequality we have
$$
\RE\ q> 1,\ \  \E\left[\exp\left\{q\left(|\xi^i|+\int_0^T \left(\alpha_t+\beta |U_t|\right){\rm d}t\right)\right\}\right]<+\infty,\ \ \ \ i=1,\cdots,n.
$$
It then follows from Corollary 6 in \citet{BriandHu2008PTRF} that for each $i=1,\cdots,n$, the BSDE
$$
Y_t^i=\xi^i+\int_t^T g^i(s,U_s,Z_s^i){\rm d}s-\int_t^T Z_s^i {\rm d}B_s, \ \ t\in\T
$$
has a unique adapted solution $(Y^i,Z^i)$ such that for each $q>1$,
$$\E\left[\exp\left(q\sup_{t\in \T}|Y_t^i|\right)+\left(\int_0^T |Z_t^i|^2{\rm d}t\right)^{q\over 2}\right]<+\infty,$$
which means that, in view of H\"{o}lder's inequality, the system of BSDEs \eqref{eq:5.1} admits a unique solution $(Y,Z)$ in the space of processes $\mathcal{E}(\R^n)\times \mcal (\R^{n\times d})$.\vspace{0.2cm}

Based on the above argument, we can set $(Y^{(0)},Z^{(0)})=(0,0)$ and define, recursively, the sequence of processes $\{(Y^{(m)},Z^{(m)})\}_{m=1}^\infty$ in the space of processes $\mathcal{E}(\R^n)\times \mcal (\R^{n\times d})$ by the unique adapted solution of system of BSDEs:
\begin{equation}\label{eq:5.2}
  Y_t^{(m+1);i}=\xi^i+\int_t^T g^i(s,Y_s^{(m)},Z_s^{(m+1);i}){\rm d}s-\int_t^T Z_s^{(m+1);i}{\rm d}B_s, \ \ t\in\T;\  \ i=1,\cdots, n,
\end{equation}
where for sake of convenience, we denote by $Y^{(m);i}$ and $Z^{(m);i}$, respectively, the $i$th component of $Y^{(m)}$ and the $i$th row of $Z^{(m)}$. In the sequel, we will show that $\{(Y^{(m)},Z^{(m)})\}_{m=1}^\infty$ is a Cauchy sequence in the space $\s^q(\R^n)\times \hcal^q(\R^{n\times d})$ for each $q\geq 1$, and then converges to a pair of adapted processes $(Y,Z)$ in $\mathcal{E}(\R^n)\times \mcal(\R^{n\times d})$, which is the unique desired solution of system of BSDE \eqref{eq:1.1}.\vspace{0.2cm}

We first prove that
\begin{equation}\label{eq:5.3}
\RE\ q>1,\ \ \ \sup_{m\geq 0}\E\left[\exp\left(q\gamma \sup_{t\in \T}|Y_t^{(m)}|\right)\right]\leq K(q),
\end{equation}
where
$$
\begin{array}{l}
\Dis K(q):=\left(A(2q)A(8nq)\right)^{[2n\beta T]+1}\E\left[\exp\left(4n(8n)^{[2n\beta T]+1}q\gamma |\xi|\right)\right]\vspace{0.1cm}\\
\Dis \hspace*{1.5cm}\cdot \E\left[\exp\left(4n(16n)^{[2n\beta T]+1}q\gamma\int_0^T \alpha_s{\rm d}s \right)\right] <+\infty
\end{array}\vspace{0.1cm}
$$
with
\begin{equation}\label{eq:5.3*}
A(q):=\left({q\over q-1}\right)^{2q}.\vspace{0.1cm}
\end{equation}
In fact, for each $i=1,\cdots,n$ and $m\geq 0$, it follows from \ref{A:B1} and \ref{A:B3} that $\as$,
\begin{equation}\label{eq:5.4}
\RE\ z\in\R^{1\times d},\ \ \ |g^i(t,Y_t^{(m)},z)|\leq \alpha_t+\beta |Y_t^{(m)}|+\frac{\gamma}{2} |z|^2,
\end{equation}
and $g^i(t,Y_t^{(m)},z)$ is convex or concave in $z$. Furthermore, in view of assumption \ref{A:B4} and the fact that $Y^{(m)}\in \mathcal{E}(\R^n)$, by H\"{o}lder's inequality we have
$$
\RE\ p\geq 1,\ \  \E\left[\exp\left\{p\left(\sup_{t\in \T}|Y_t^{(m+1);i}|+\int_0^T \left(\alpha_t+\beta |Y_t^{(m)}|\right){\rm d}t\right)\right\}\right]<+\infty.
$$
Then, we can use \cref{lem:A.3} in Appendix to get that for $i=1,\cdots,n$ and $m\geq 0$,
$$
\exp\left(\gamma |Y_t^{(m+1);i}|\right)\leq \E_t\left[\exp\left(\gamma |\xi^i|+\gamma\int_t^T \left(\alpha_s+\beta |Y_s^{(m)}|\right) {\rm d}s\right)\right],\ \ t\in\T.
$$
Consequently, by Jensen's inequality we have, for each $m\geq 0$,
\begin{equation}\label{eq:5.5}
\exp\left(\gamma |Y_t^{(m+1)}|\right)\leq \E_t\left[\exp\left(n\gamma |\xi|+n\gamma\int_t^T \left(\alpha_s+\beta |Y_s^{(m)}|\right) {\rm d}s\right)\right],\ \ t\in\T.
\end{equation}
In view of \eqref{eq:5.5}, Doob's maximal inequality for martingales together with H\"{o}lder's inequality yields that for each $q>1$, $m\geq 0$ and $t\in\T$, we have
\begin{equation}\label{eq:5.6}
\begin{array}{ll}
&\Dis \E\left[\exp\left(q\gamma \sup_{s\in [t,T]}|Y_s^{(m+1)}|\right)\right]\vspace{0.2cm}\\
\leq &\Dis\left(\frac{q}{q-1}\right)^q \E\left[\exp\left(nq\gamma |\xi|+nq\gamma\int_t^T \alpha_s{\rm d}s+nq\gamma \beta \sup_{s\in [t,T]}|Y_s^{(m)}|(T-t) \right)\right]\vspace{0.2cm}\\
\leq &\Dis  [C(q)]^{1/2}\left\{\E\left[\exp\left(2nq\gamma \beta \sup_{s\in [t,T]}|Y_s^{(m)}| (T-t)\right)\right]\right\}^{1/2},
\end{array}
\end{equation}
where, in view of assumption \ref{A:B4}, 
$$
C(q):=A(q)\E\left[\exp\left(2nq\gamma |\xi|+2nq\gamma\int_0^T \alpha_s{\rm d}s \right)\right]<+\infty
$$
with $A(q)$ being defined in \eqref{eq:5.3*}.\vspace{0.2cm}

For $\beta=0$, it is clear from \eqref{eq:5.6} that
\begin{equation}\label{eq:5.7}
\RE\ q>1,\ \ \ \sup_{m\geq 0}\E\left[\exp\left(q\gamma \sup_{t\in [0,T]}|Y_t^{(m)}|\right)\right]\leq \sqrt{C(q)}.
\end{equation}
Otherwise, let $m_0$ is the unique positive integer satisfying
$T-m_0\eps\leq 0<T-(m_0-1)\eps$ or, equivalently,
\begin{equation}\label{eq:5.8}
2n\beta T={T\over \eps}\leq m_0 <2n\beta T+1
\end{equation}
with
$$
\eps:=\frac{1}{2n\beta}>0.
$$
If $m_0=1$, then $2nq\gamma\beta(T-t)\leq 2nq\gamma\beta T\leq 2nq\gamma\beta\eps=q\gamma$ for $t\in [0,T]$, and it follows from \eqref{eq:5.6} that for each $m\geq 0$, $q>1$ and $t\in\T$ ,
$$
\E\left[\exp\left(q\gamma \sup_{s\in [t,T]}|Y_s^{(m+1)}|\right)\right]\leq \sqrt{C(q)} \left(\E\left[\exp\left(q\gamma \sup_{s\in [t,T]}|Y_s^{(m)}|\right)\right]\right)^{1/2},
$$
and, by induction, \vspace{0.1cm}
$$
\E\left[\exp\left(q\gamma \sup_{s\in [t,T]}|Y_s^{(m+1)}|\right)\right]\leq \sqrt{C(q)}^{1+{1\over 2}+\cdots +{1\over 2^m}} \left(\E\left[\exp\left(q\gamma \sup_{s\in [t,T]}|Y_s^{(0)}|\right)\right]\right)^{1\over 2^{m+1}}\leq C(q).\vspace{0.2cm}
$$
Consequently, by H\"{o}lder's inequality we have, for each $q>1$,
\begin{equation}\label{eq:5.9}
\sup_{m\geq 0}\E\left[\exp\left(q\gamma \sup_{t\in \T}|Y_t^{(m)}|\right)\right]\leq A(q)\E\left[\exp\left(4nq\gamma |\xi|\right)\right]\E\left[\exp\left(4nq\gamma\int_0^T \alpha_s{\rm d}s \right)\right]<+\infty.
\end{equation}
If $m_0=2$, then $2nq\gamma\beta(T-t)\leq 2nq\gamma\beta \eps=q\gamma$ for $t\in [T-\eps,T]$, and from the argument in the case of $m_0=1$ it follows that for each $q>1$,
\begin{equation}\label{eq:5.10}
\sup_{m\geq 0}\E\left[\exp\left(q\gamma \sup_{t\in [T-\eps,T]}|Y_t^{(m)}|\right)\right]\leq A(q)\E\left[\exp\left(4nq\gamma |\xi|\right)\right]\E\left[\exp\left(4nq\gamma\int_0^T \alpha_s{\rm d}s \right)\right]<+\infty,
\end{equation}
which yields that
\begin{equation}\label{eq:5.11}
\sup_{m\geq 0}\E\left[\exp\left(q\gamma |Y_{T-\eps}^{(m)}|\right)\right]\leq A(q)\E\left[\exp\left(4nq\gamma |\xi|\right)\right]\E\left[\exp\left(4nq\gamma\int_0^T \alpha_s{\rm d}s \right)\right]<+\infty.
\end{equation}
Now, consider the following system of BSDEs: for $i=1,\cdots, n$,
$$
Y_t^{(m+1);i}=Y_{T-\eps}^{(m+1);i}+\int_t^{T-\eps} g^i(s,Y_s^{(m)},Z_s^{(m+1);i}){\rm d}s-\int_t^{T-\eps} Z_s^{(m+1);i}{\rm d}B_s, \ \ t\in [0,T-\eps].
$$
In view of \eqref{eq:5.11}, a similar argument as that obtaining \eqref{eq:5.9} yields that for each $q>1$,
$$
\begin{array}{lll}
&\Dis \sup_{m\geq 0}\E\left[\exp\left(q\gamma \sup_{t\in [0,T-\eps]}|Y_t^{(m)}|\right)\right]\vspace{0.1cm}\\
\leq &A(q)\Dis \sup_{m\geq 0}\E\left[\exp\left(4nq\gamma |Y_{T-\eps}^{(m)}|\right)\right]\E\left[\exp\left(4nq\gamma\int_0^T \alpha_s{\rm d}s \right)\right],\vspace{0.1cm}\\
\leq &\Dis A(q)A(4nq)\E\left[\exp\left(16n^2q\gamma |\xi|\right)\right]\E\left[\exp\left(32n^2q\gamma\int_0^T \alpha_s{\rm d}s \right)\right]<+\infty,
\end{array}
$$
and then, in view of \eqref{eq:5.10} and H\"{o}lder's inequality,
$$
\begin{array}{ll}
& \Dis \sup_{m\geq 0}\E\left[\exp\left(q\gamma \sup_{t\in \T}|Y_t^{(m)}|\right)\right]\\
\leq & \Dis A(2q)A(8nq)\E\left[\exp\left(32 n^2q\gamma |\xi|\right)\right]\E\left[\exp\left(64n^2q\gamma\int_0^T \alpha_s{\rm d}s \right)\right]<+\infty.
\end{array}
$$
Proceeding the above computation gives that if $m_0$ satisfies \eqref{eq:5.8}, then for each $q>1$,
$$
\begin{array}{ll}
&\Dis \sup_{m\geq 0}\E\left[\exp\left(q\gamma \sup_{t\in \T}|Y_t^{(m)}|\right)\right]\vspace{0.1cm}\\
\leq &\Dis \left(A(2q)A(8nq)\right)^{m_0-1}\E\left[\exp\left(4n(8n)^{m_0-1}q\gamma |\xi|\right)\right]\E\left[\exp\left(4n(16n)^{m_0-1}q\gamma\int_0^T \alpha_s{\rm d}s \right)\right]<+\infty,
\end{array}
$$
which together with \eqref{eq:5.8} and \eqref{eq:5.7} yields the desired conclusion \eqref{eq:5.3}.\vspace{0.2cm}

In the sequel, we show that
\begin{equation}\label{eq:5.12}
\RE\ q> 1,\ \ \ \sup_{m\geq 0}\E\left[\left(\int_0^T |Z_s^{(m)}|^2 {\rm d}s\right)^{q/2}\right]<+\infty.
\end{equation}
In fact, using It\^{o}-Tanaka's formula to compute $\exp(2\gamma |Y_t^{(m+1;i)}|)$ and utilizing inequality \eqref{eq:5.4}, we can deduce that for each $i=1,\cdots,n$ and $m\geq 0$,
$$
\begin{array}{ll}
&\Dis \gamma^2\int_0^T \exp(2\gamma |Y_s^{(m+1);i}|) |Z_s^{(m+1);i}|^2{\rm d}s\vspace{0.1cm}\\
\leq &\Dis \exp(2\gamma |\xi^i|)+2\gamma \int_0^T \exp(2\gamma |Y_s^{(m+1);i}|)\left(\alpha_s+\beta |Y_s^{m}|)\right){\rm d}s\vspace{0.1cm}\\
&\Dis -2\gamma \int_0^T \exp(2\gamma |Y_s^{(m+1);i}|){\rm sgn}(Y_s^{(m+1);i}) Z_s^{(m+1);i}{\rm d}B_s.
\end{array}
$$
In view of the previous inequality, we can use the BDG inequality and Young's inequality to derive that for each $q>1$, $m\geq 0$ and $i=1,\cdots,n$,
\begin{equation}\label{eq:5.13}
\begin{array}{lll}
&&\Dis \E\left[\left(\int_0^T |Z_s^{(m+1);i}|^2 {\rm d}s\right)^{q/2}\right]\vspace{0.1cm}\\
&\leq &\Dis  C \left(\E\left[\exp\left(2q\gamma \sup_{s\in\T}|Y_s^{(m+1);i}|\right)\right]+ \E\left[\left(\int_0^T \alpha_s +\beta |Y_s^{(m)}| {\rm d}s\right)^{q}\right] \right)\vspace{0.2cm}\\
&\leq &\Dis \bar C \left(\E\left[\exp\left(\int_0^T \alpha_s{\rm d}s\right)\right]+\sup_{m\geq 0} \E\left[\exp\left(2q\gamma \sup_{s\in\T}|Y_s^{(m)}|\right)\right]\right),\vspace{0.1cm}
\end{array}
\end{equation}
where $C$ and $\bar C$ are two positive constants depending only on $(q,\gamma)$ and $(q,\gamma,\beta,T)$,  respectively. Then the inequality \eqref{eq:5.12} follows immediately from \eqref{eq:5.13} and \eqref{eq:5.3}.\vspace{0.2cm}

Next, without  loss of generality, we assume that the generator $g$ is component-wisely  convex  in \ref{A:B3}, that is, $\as$, $g^i(\omega,t,y,\cdot)$ is convex for each $i=1,\cdots,n$ and $y\in \R^n$. Otherwise, if $g^i(\omega,t,y,\cdot)$ is concave for some integer $i$, it is sufficient to replace the primary unknown $(y^i,z^i)$ with the new pair of unknown variables  $(-y^i, -z^i)$  in the underlying system of BSDEs.

For each fixed $m,p\geq 1$ and $\theta\in (0,1)$, define
$$
\delta_{\theta}Y^{(m,p)}:=\frac{Y^{(m+p)}-\theta Y^{(m)}}{1-\theta}\ \ \ {\rm and}\ \ \ \delta_{\theta}Z^{(m,p)}:=\frac{Z^{(m+p)}-\theta Z^{(m)}}{1-\theta}.
$$
Then $(\delta_{\theta}Y^{(m,p)},\delta_{\theta}Z^{(m,p)})\in \mathcal{E}(\R^n)\times \mcal(\R^{n\times d})$ solves the following system of BSDEs: for each $i=1,\cdots, n$,
\begin{equation}\label{eq:5.14}
  \delta_{\theta} Y_t^{(m,p);i}=\xi^i+\int_t^T \delta_{\theta} g^{(m,p);i}(s,\delta_{\theta} Z_s^{(m,p);i}){\rm d}s-\int_t^T \delta_{\theta} Z_s^{(m,p);i}{\rm d}B_s, \ \ t\in\T,
\end{equation}
where $\ass$, for each $z\in \R^{1\times d}$,
\begin{equation}\label{eq:5.15}
\delta_{\theta} g^{(m,p);i}(s, z):={1\over 1-\theta}\left(g^{i}(s,Y_s^{(m+p-1)}, (1-\theta)z+\theta Z_s^{(m);i})-\theta g^{i}(s,Y_s^{(m-1)},Z_s^{(m);i})\right).
\end{equation}
It follows from \eqref{eq:5.15} and assumptions \ref{A:B2}, \ref{A:B3} and \ref{A:B1} that $\ass$, for each $z\in \R^{1\times d}$,
$$
\begin{array}{lll}
\Dis \delta_{\theta} g^{(m,p);i}(s,z)&\leq &\beta |\delta_{\theta}Y_s^{(m-1,p)}|+\beta |Y_s^{(m-1)}|+g^{i}(s,Y_s^{(m-1)},z) \vspace{0.1cm}\\
&\leq & \Dis \alpha_s+\beta |\delta_{\theta}Y_s^{(m-1,p)}|+2\beta |Y_s^{(m-1)}|+\frac{\gamma}{2}|z|^2,
\end{array}
$$
which together with \eqref{eq:5.3} means that all the conditions in \cref{lem:A.4} are satisfied for BSDE \eqref{eq:5.14}, and then for $i=1,\cdots,n$, we have
\begin{equation}\label{eq:5.16}
\begin{array}{ll}
&\Dis \exp\left(\gamma \left(\delta_{\theta} Y_t^{(m,p);i}\right)^+\right)\vspace{0.1cm}\\
\leq &\Dis \E_t\left[\exp\left(\gamma (\xi^i)^+ +\gamma\int_t^T \left(\alpha_s+\beta |\delta_{\theta}Y_s^{(m-1,p)}|+2\beta |Y_s^{(m-1)}|\right) {\rm d}s\right)\right],\ t\in\T.
\end{array}
\end{equation}
On the other hand, define
$$
\delta_{\theta}\widetilde Y^{(m,p)}:=\frac{Y^{(m)}-\theta Y^{(m+p)}}{1-\theta}\ \ \ {\rm and}\ \ \ \delta_{\theta}\widetilde Z^{(m,p)}:=\frac{Z^{(m)}-\theta Z^{(m+p)}}{1-\theta}.
$$
The same computation as above yields that for $i=1,\cdots,n$,
\begin{equation}\label{eq:5.17}
\begin{array}{ll}
&\Dis \exp\left(\gamma \left(\delta_{\theta} \widetilde Y_t^{(m,p);i}\right)^+\right)\vspace{0.1cm}\\
\leq &\Dis \E_t\left[\exp\left(\gamma (\xi^i)^+ +\gamma\int_t^T \left(\alpha_s+\beta |\delta_{\theta} \widetilde Y_s^{(m-1,p)}|+2\beta |Y_s^{(m+p-1)}|\right) {\rm d}s\right)\right],\ t\in\T.\vspace{0.2cm}
\end{array}
\end{equation}
Furthermore, observe that for $i=1,\cdots,n$ and $t\in\T$, we have
$$
\begin{array}{lll}
\Dis \left(\delta_{\theta} Y_t^{(m,p);i}\right)^- &=& \Dis \frac{\left(Y_t^{(m+p);i}-\theta Y_t^{(m);i}\right)^-} {1-\theta}= \Dis \frac{\left(\theta Y_t^{(m);i}-Y_t^{(m+p);i}\right)^+} {1-\theta} \vspace{0.1cm}\\
&\leq & \Dis \frac{\theta\left( Y_t^{(m);i}-\theta Y_t^{(m+p);i}\right)^+ +(1-\theta^2)|Y_t^{(m+p);i}|} {1-\theta}\vspace{0.1cm}\\
&\leq & \Dis \left(\delta_{\theta} \widetilde Y_t^{(m,p);i}\right)^+ +2|Y_t^{(m+p)}|
\end{array}
$$
and, similarly,
$$
\left(\delta_{\theta} \widetilde Y_t^{(m,p);i}\right)^- \leq \left(\delta_{\theta} Y_t^{(m,p);i}\right)^+ +2|Y_t^{(m)}|.
$$
It follows from \eqref{eq:5.16} and \eqref{eq:5.17} together with Jensen's inequality that for each $i=1,\cdots, n$ and $t\in\T$,
$$
\begin{array}{ll}
&\Dis \exp\left(\gamma |\delta_{\theta} Y_t^{(m,p);i}|\right) = \exp\left(\gamma \left(\delta_{\theta} Y_t^{(m,p);i}\right)^+\right)\cdot \exp\left(\gamma \left(\delta_{\theta} Y_t^{(m,p);i}\right)^-\right) \vspace{0.1cm}\\
\leq &\Dis
\E_t\left[\exp\left(2\gamma |\xi|+2\gamma |Y_t^{(m+p)}| +2\gamma\int_t^T \left(\alpha_s+2\beta |Y_s^{(m-1)}|+2\beta |Y_s^{(m+p-1)}|\right) {\rm d}s \right.\right. \vspace{0.1cm}\\
&\Dis \hspace{1.4cm}\left.\left. +2\gamma\beta \int_t^T \left( |\delta_{\theta}Y_s^{(m-1,p)}|+|\delta_{\theta}\widetilde Y_s^{(m-1,p)}| \right) {\rm d}s\right)\right]
\end{array}
$$
and
$$
\begin{array}{ll}
&\Dis \exp\left(\gamma |\delta_{\theta} \widetilde Y_t^{(m,p);i}|\right) = \exp\left(\gamma \left(\delta_{\theta} \widetilde Y_t^{(m,p);i}\right)^+\right)\cdot \exp\left(\gamma \left(\delta_{\theta} \widetilde Y_t^{(m,p);i}\right)^-\right) \vspace{0.1cm}\\
\leq &\Dis
\E_t\left[\exp\left(2\gamma |\xi|+2\gamma |Y_t^{(m)}| +2\gamma\int_t^T \left(\alpha_s+2\beta |Y_s^{(m-1)}|+2\beta |Y_s^{(m+p-1)}|\right) {\rm d}s \right.\right. \vspace{0.1cm}\\
&\Dis \hspace{1.4cm}\left.\left. +2\gamma\beta \int_t^T \left( |\delta_{\theta}Y_s^{(m-1,p)}|+|\delta_{\theta}\widetilde Y_s^{(m-1,p)}| \right) {\rm d}s\right)\right].
\end{array}
$$
Consequently, by Jensen's inequality again we have for each $t\in \T$,
$$
\begin{array}{ll}
&\Dis \exp\left(\gamma \left(|\delta_{\theta} Y_t^{(m,p);i}|+|\delta_{\theta} \widetilde Y_t^{(m,p);i}|\right) \right) \vspace{0.1cm}\\
\leq &\Dis
\E_t\left[\exp\left\{4\gamma \left(|\xi|+|Y_t^{(m)}| + |Y_t^{(m+p)}|+\int_t^T \left(\alpha_s+2\beta |Y_s^{(m-1)}|+2\beta |Y_s^{(m+p-1)}|\right) {\rm d}s\right) \right.\right. \vspace{0.1cm}\\
&\Dis \hspace{1.4cm}\left.\left. +4\gamma\beta \int_t^T \left( |\delta_{\theta}Y_s^{(m-1,p)}|+|\delta_{\theta}\widetilde Y_s^{(m-1,p)}| \right) {\rm d}s\right\}\right],\ \ \ \ i=1,\cdots, n,
\end{array}
$$
and then
\begin{equation}\label{eq:5.18}
\begin{array}{ll}
&\Dis \exp\left(\gamma \left(|\delta_{\theta} Y_t^{(m,p)}|+|\delta_{\theta} \widetilde Y_t^{(m,p)}|\right) \right) \vspace{0.1cm}\\
\leq &\Dis
\E_t\left[\exp\left\{4n\gamma \left(|\xi|+|Y_t^{(m)}| + |Y_t^{(m+p)}|+\int_t^T \left(\alpha_s+2\beta |Y_s^{(m-1)}|+2\beta |Y_s^{(m+p-1)}|\right) {\rm d}s\right) \right.\right. \vspace{0.1cm}\\
&\Dis \hspace{1.4cm}\left.\left. +4n\gamma\beta \int_t^T \left( |\delta_{\theta}Y_s^{(m-1,p)}|+|\delta_{\theta}\widetilde Y_s^{(m-1,p)}| \right) {\rm d}s\right\}\right].
\end{array}
\end{equation}
In view of \eqref{eq:5.18}, Doob's maximal inequality for martingales together with H\"{o}lder's inequality yields that for each $q>1$ and $t\in\T$, we have
\begin{equation}\label{eq:5.19}
\begin{array}{ll}
&\Dis \E\left[\exp\left(q\gamma \sup_{s\in [t,T]}\left(|\delta_{\theta} Y_s^{(m,p)}|+|\delta_{\theta} \widetilde Y_s^{(m,p)}|\right)\right)\right]\vspace{0.2cm}\\
\leq &\Dis \left(q\over q-1\right)^q\E\left[\exp\left\{4nq\gamma \left(|\xi|+\sup_{s\in [t,T]} \left(|Y_s^{(m)}|+|Y_s^{(m+p)}|\right)\right)\right.\right.
\vspace{0.1cm}\\
&\Dis \hspace{1.5cm} +4nq\gamma\int_t^T \left(\alpha_s+2\beta |Y_s^{(m-1)}|+2\beta |Y_s^{(m+p-1)}|\right) {\rm d}s\vspace{0.1cm}\\
&\Dis \hspace{1.5cm} \left.\left.+4nq\gamma \beta \sup_{s\in [t,T]}\left( |\delta_{\theta}Y_s^{(m-1,p)}|+|\delta_{\theta}\widetilde Y_s^{(m-1,p)}| \right)(T-t) \right\}\right]\vspace{0.2cm}\\
\leq &\Dis [\bar C(q)]^{1/2}\left\{\E\left[\exp\left(8nq\gamma \beta \sup_{s\in [t,T]}\left( |\delta_{\theta}Y_s^{(m-1,p)}|+|\delta_{\theta}\widetilde Y_s^{(m-1,p)}| \right) (T-t) \right)\right]\right\}^{1/2},
\end{array}
\end{equation}
where, in view of \eqref{eq:5.3},
$$
\begin{array}{lll}
\bar C(q) & :=&\Dis A(q)\sup_{m,p\geq 1}\E\left[\exp\left\{8nq\gamma \left(|\xi|+\sup_{t\in\T}|Y_t^{(m)}|+\sup_{t\in\T} |Y_t^{(m+p)}|\right)\right.\right.\vspace{0.1cm}\\
&& \hspace{2.1cm} \Dis +\left. \left. 8nq\gamma\int_0^T \left(\alpha_s+2\beta |Y_s^{(m-1)}|+2\beta |Y_s^{(m+p-1)}|\right) {\rm d}s\right\}\right]<+\infty\vspace{0.1cm}
\end{array}
$$
with $A(q)$ being defined in \eqref{eq:5.3*}. 

Based on the above analysis, we now can prove that $\{(Y^{(m)},Z^{(m)})\}_{m=1}^{\infty}$ is a Cauchy sequence in the space $\s^q(\R^n)\times \hcal^q(\R^{n\times d})$ for each $q>1$. In fact, observing the similarity of \eqref{eq:5.19} and \eqref{eq:5.6}, we can induce with respect to $m$ and use a similar argument as that obtaining \eqref{eq:5.3} to derive the existence of a positive constant $\bar K(q)$ depending on $q$ and being independent of $\theta$ such that for each $q>1$,
$$
\begin{array}{ll}
& \Dis \E\left[\exp\left(q\gamma \sup_{t\in \T}\left(|\delta_{\theta} Y_t^{(m,p)}|+|\delta_{\theta} \widetilde Y_t^{(m,p)}|\right)\right)\right]\vspace{0.1cm}\\
\leq & \Dis \bar K(q) \left(\E\left[\exp\left(q\gamma \sup_{t\in \T}\left(|\delta_{\theta} Y_t^{(1,p)}|+|\delta_{\theta} \widetilde Y_t^{(1,p)}|\right)\right)\right]\right)^{\frac{2^{[2n\beta T]+1}}{2^m}},
\end{array}
$$
from which together with \eqref{eq:5.3} it follows that for each $q>1$ and $\theta\in (0,1)$,
\begin{equation}\label{eq:5.20}
\limsup_{m\To\infty}\sup_{p\geq 1}\E\left[\exp\left(q\gamma \sup_{t\in \T}\left(|\delta_{\theta} Y_t^{(m,p)}|+|\delta_{\theta} \widetilde Y_t^{(m,p)}|\right)\right)\right]\leq \bar K(q).
\end{equation}
Thus, for each $\theta\in (0,1)$,
$$
\limsup_{m\To\infty}\sup_{p\geq 1}\E\left[\sup_{t\in \T}|Y_t^{(m+p)}-\theta Y_t^{(m)}|\right]\leq (1-\theta)\frac{\bar K(2)}{2\gamma},
$$
and then, in view of \eqref{eq:5.3},
$$
\limsup_{m\To\infty}\sup_{p\geq 1}\E\left[\sup_{t\in \T}|Y_t^{(m+p)}- Y_t^{(m)}|\right]\leq (1-\theta)\left(\frac{\bar K(2)}{2\gamma}+\sup_{m\geq 1}\E\left[\sup_{t\in \T}|Y_t^{(m)}|\right]\right)<+\infty.
$$
Sending $\theta$ to $1$, in view of  \eqref{eq:5.3},  we see  that there is an adapted process $Y\in \mathcal{E}(\R^n)$ such that for each $q>1$,
\begin{equation}\label{eq:5.21}
\lim_{m\To\infty} \E\left[\sup_{t\in \T}|Y_t^{(m)}-Y_t|^q\right]=0
\end{equation}
and
\begin{equation}\label{eq:5.22}
\lim_{m\To\infty} \E\left[\exp\left(q\sup_{t\in \T}|Y_t^{(m)}-Y_t|\right)\right]=1.\vspace{0.2cm}
\end{equation}
Furthermore, it follows from It\^{o}'s formula that for each $m,p\geq 1$,
\begin{equation}\label{eq:5.23}
\begin{array}{l}
\Dis \E\left[\int_0^T |Z^{(m+p)}_s-Z^{(m)}_s|^2 {\rm d}s\right]\leq 2\E\left[\sup_{t\in\T}\left|Y^{(m+p)}_t-Y^{(m)}_t\right|\right.  \vspace{0.2cm}\\
\Dis \hspace{2cm} \cdot\left.\int_0^T \sum_{i=1}^n\left|g^i(s,Y^{(m+p-1)}_s, Z^{(m+p);i}_s)-g^i(s,Y^{(m-1)}_s, Z^{(m);i}_s)\right| {\rm d}s \right].
\end{array}
\end{equation}
And, by virtue of \ref{A:B1}, \eqref{eq:5.3} and \eqref{eq:5.12} we get that
\begin{equation}\label{eq:5.24}
\sup_{m,p\geq 1}\E\left[\left(\int_0^T \sum_{i=1}^n \left|g^i(s,Y^{(m+p-1)}_s, Z^{(m+p);i}_s)-g^i(s,Y^{(m-1)}_s, Z^{(m);i}_s)\right| {\rm d}s\right)^2\right]<+\infty.
\end{equation}
Then, applying H\"{o}lder's inequality to \eqref{eq:5.23} and using \eqref{eq:5.21} and \eqref{eq:5.24} leads to that
$$
\lim_{m\To\infty}\sup_{p\geq 1}\E\left[\int_0^T |Z_s^{(m+p)}-Z_s^{(m)}|^2{\rm d}s\right]=0,
$$
from which together with \eqref{eq:5.12} it follows that there exists a process $Z\in \mcal(\R^{n\times d})$ such that
\begin{equation}\label{eq:5.25}
\RE\ q>1,\ \ \lim_{m\To\infty} \E\left[\left(\int_0^T |Z_s^{(m)}-Z_s|^2{\rm d}s\right)^{q/2}\right]=0.
\end{equation}
Finally, in view of \eqref{eq:5.21}, \eqref{eq:5.22} and \eqref{eq:5.25}, by sending $m$ to infinity in \eqref{eq:5.2} we can deduce that $(Y,Z)$ is a desired solution of system of BSDE \eqref{eq:1.1}. \vspace{0.2cm}

It remains to show the uniqueness part for completing the proof of \cref{thm:2.8}. For this, let $(\widetilde Y,\widetilde Z)$ be also a solution of system of BSDE \eqref{eq:1.1} in the space $\mathcal{E}(\R^n)\times\mcal(\R^{n\times d})$, and for $\theta\in (0,1)$, define
$$
\delta_{\theta}U:=\frac{Y-\theta \widetilde Y}{1-\theta},\ \ \ \ \ \ \delta_{\theta}V:=\frac{Z-\theta \widetilde Z}{1-\theta},
$$
and
$$
\delta_{\theta}\widetilde U:=\frac{\widetilde Y-\theta Y}{1-\theta},\ \ \ \ \ \ \delta_{\theta}\widetilde V:=\frac{\widetilde Z-\theta Z}{1-\theta}.\vspace{0.1cm}
$$
Using a similar argument as that from \eqref{eq:5.14} to \eqref{eq:5.19}, we can deduce that for each $q>1$,
\begin{equation}\label{eq:5.26}
\begin{array}{ll}
&\Dis \E\left[\exp\left(q\gamma \sup_{s\in [t,T]}\left(|\delta_{\theta} U_s|+|\delta_{\theta} \widetilde U_s| \right)\right)\right]\vspace{0.2cm}\\
\leq &\Dis \left(q\over q-1\right)^q  \E\left[\exp\left\{4nq\gamma \left(|\xi|+\sup_{s\in [t,T]}\left(|Y_s|+|\widetilde Y_s|\right)+\int_t^T \left(\alpha_s+2\beta |Y_s|+2\beta |\widetilde Y_s|\right) {\rm d}s\right)\right.\right.\vspace{0.1cm}\\
&\Dis \hspace{3.1cm}\left.\left. +4nq\gamma \beta \sup_{s\in [t,T]}\left(|\delta_{\theta} U_s|+|\delta_{\theta} \widetilde U_s| \right)(T-t) \right\}\right]\vspace{0.2cm}\\
\leq &\Dis \left[\widetilde C(q)\right]^{1/2}\left\{\E\left[\exp\left(8nq\gamma \beta \sup_{s\in [t,T]}\left(|\delta_{\theta} U_s|+|\delta_{\theta} \widetilde U_s|\right) (T-t)\right)\right]\right\}^{1/2},\ \ t\in\T,
\end{array}
\end{equation}
where
$$
\begin{array}{l}
\widetilde C(q):=A(q)\E\left[\exp\left\{8nq\gamma \left(|\xi|+\sup_{t\in\T} \left(|Y_t|+|\widetilde Y_t|\right)+\int_0^T \left(\alpha_s+2\beta |Y_s|+2\beta |\widetilde Y_s|\right){\rm d}s\right) \right\}\right]\\
\hspace*{1cm} <+\infty
\end{array} \vspace{0.1cm}
$$
with $A(q)$ being defined in \eqref{eq:5.3*}. For $\beta=0$, it is clear from \eqref{eq:5.26} that
$$
\E\left[2\gamma \sup_{t\in [0,T]}|\delta_{\theta} U_t|\right] \leq \E\left[\exp\left(2\gamma \sup_{t\in [0,T]}|\delta_{\theta} U_t|\right)\right]\leq \sqrt{\widetilde C(2)},\vspace{0.2cm}
$$
and then
$$
\begin{array}{lll}
\Dis \E\left[\sup_{t\in [0,T]}|Y_t-\widetilde Y_t|\right]& \leq &\Dis \E\left[\sup_{t\in [0,T]}|Y_t-\theta \widetilde Y_t|\right]+(1-\theta)\E\left[\sup_{t\in [0,T]}|\widetilde Y_t|\right]\vspace{0.2cm}\\
&\leq & \Dis (1-\theta)\left(\frac{\sqrt{\widetilde C(2)}}{2\gamma}+\E\left[\exp\left(\sup_{t\in [0,T]}|\widetilde Y_t|\right)\right]\right),
\end{array}
$$
in which letting $\theta\To 1$ yields that $Y=\widetilde Y$ and then $Z=\widetilde Z$ on the time interval $\T$. Otherwise, in the case of $T\leq \bar\eps:=1/8n\beta$, it follows from \eqref{eq:5.26} that
$$
\E\left[2\gamma \sup_{t\in [0,T]}|\delta_{\theta} U_t|\right] \leq \E\left[\exp\left(2\gamma \sup_{t\in [0,T]}|\delta_{\theta} U_t|\right)\right]\leq \widetilde C(2),
$$
and then $Y=\widetilde Y$ and $Z=\widetilde Z$ on the time interval $[0,T]$. Similarly, if $m_0$ is the unique positive integer such that $(m_0-1)\bar\eps<T\leq m_0\bar\eps$, then we can successively prove the uniqueness on
the time intervals $[T-\bar\eps,T]$, $[T-2\bar\eps,T-\bar\eps]$, $\cdots$, $[T-(m_0-1)\bar\eps,T-(m_0-2)\bar\eps]$ and $[0,T-(m_0-1)\bar\eps]$.\vspace{0.2cm}

The proof of \cref{thm:2.8} is then complete.

\appendix
\section{Several auxiliary results on one-dimensional quadratic BSDEs}
\renewcommand{\appendixname}{}

We consider the following one-dimensional BSDE:
\begin{equation}\label{eq:A.1}
  Y_t=\eta+\int_t^T f(s,Z_s)\, {\rm d}s-\int_t^T Z_s\,  {\rm d}B_s, \ \ t\in\T,
\end{equation}
where the terminal value $\eta$ is a real-valued $\F_T$-measurable random variable, and the generator function $f(\cdot, \cdot, z):\Omega\times\T\To \R$
is $(\F_t)$-progressively measurable for each $z\in \R^{1\times d}$. Here,  the solution $(Y_t,Z_t)_{t\in\T}$ is defined as a pair of $(\F_t)$-progressively measurable processes taking values in $\R\times\R^{1\times d}$,  such that \eqref{eq:A.1} is satisfied.

Assume that there exists a pair of processes $(U,V)\in \s^\infty(\R^n)\times {\rm BMO}(\R^{n\times d})$ such that the generator $f$  satisfies the following assumptions.

\begin{enumerate}
\renewcommand{\theenumi}{(A\arabic{enumi})}
\renewcommand{\labelenumi}{\theenumi}
\item\label{A:A1} $\as$, we have
    $$
    |f(\omega,t,z)|\leq \alpha_t(\omega)+\phi(|U_t(\omega)|)+n\lambda |V_t(\omega)|^{1+\delta}+\frac{\gamma}{2} |z|^2 \quad \hbox{\rm for each $z\in \R^{1\times d}$} ;
    $$

\item\label{A:A2} $\as$, we have
    $$
     |f(\omega,t,z)-f(\omega,t,\bar z)| \leq \phi(|U_t(\omega)|)\left(1+2|V_t(\omega)|+|z|+|\bar z|\right) |z-\bar z|
    $$
 for each $(z, \bar z)\in (\R^{1\times d})^2$.\vspace{0.1cm}
\end{enumerate}

The following lemma slightly generalizes \citet[Lemma 2.1]{HuTang2016SPA}.

\begin{lem}\label{lem:A.1}
Let the generator $f$ satisfy assumptions \ref{A:A1} and \ref{A:A2}, and
both $|\eta|$ and $\int_0^T \alpha_t{\rm d}t$ be (essentially) bounded. Then, BSDE \eqref{eq:A.1} admits a unique solution $(Y,Z)$ such that $Y$ is (essentially) bounded and $Z\cdot B:=\left(\int_0^t Z_s{\rm d}B_s\right)_{t\in\T}$ is a BMO martingale. Moreover, for each $t\in\T$ and each stopping time $\tau$ with values in $[t,T]$, we have
\begin{equation}\label{eq:A.2}
\begin{array}{lll}
|Y_t|&\leq & \Dis {1\over \gamma}\ln 2+\|\eta\|_{\infty}+\left\|\int_0^T \alpha_s{\rm d}s\right\|_{\infty}\vspace{0.1cm}\\
&&\Dis +\phi\left(\|U\|_{\s^{\infty}_{[t,T]}}\right)(T-t)
+\gamma^{\frac{1+\delta}{1-\delta}}C_{\delta,\lambda,n} \|V\|_{{\rm BMO}_{[t,T]}}^{2\frac{1+\delta}{1-\delta}}(T-t)
\end{array}
\end{equation}
and
\begin{equation}\label{eq:A.3}
\begin{array}{lll}
&& \Dis\E_\tau\left[\int_\tau^T |Z_s|^2{\rm d}s\right]\vspace{0.1cm}\\
&\leq &\Dis {1\over \gamma^2} \exp(2\gamma \|\eta\|_\infty)+{1\over \gamma}\exp\left(2\gamma \left\|\sup_{s\in [t,T]}|Y_s|\right\|_\infty\right)\vspace{0.2cm}\\
&& \cdot \Dis\left(1+2\left\|\int_0^T \alpha_s {\rm d}s\right\|_{\infty}+2\phi\left(\|U\|_{\s^{\infty}_{[t,T]}}\right)(T-t)
+2C_{\delta,\lambda,n}\|V\|_{{\rm BMO}_{[t,T]}}^{2\frac{1+\delta}{1-\delta}}(T-t)\right),
\end{array}
\end{equation}
where
\begin{equation}\label{eq:A.4}
C_{\delta,\lambda,n}:=\frac{1-\delta}{2}(1+\delta)^{\frac{1+\delta}{1-\delta}}
(n\lambda)^{\frac{2}{1-\delta}}.\vspace{0.2cm}
\end{equation}
\end{lem}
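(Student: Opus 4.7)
The argument proceeds in three stages: construct a solution to \eqref{eq:A.1}; derive the a priori estimates \eqref{eq:A.2}--\eqref{eq:A.3}, which immediately give $Y\in\s^\infty$ and $Z\cdot B\in{\rm BMO}$; and establish uniqueness.

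For existence, set $a_s := \alpha_s + \phi(|U_s|) + n\lambda |V_s|^{1+\delta}$. Since $\phi(|U|)$ is essentially bounded (as $U\in\s^\infty$), $\int_0^T\alpha_s\,ds$ is essentially bounded, and $V\in{\rm BMO}$ yields exponential integrability of $\int_0^T|V_s|^2\,ds$ (hence of $\int_0^T|V_s|^{1+\delta}\,ds$ since $\delta<1$), the driver bound $a_s+\frac{\gamma}{2}|z|^2$ has integrable part with all exponential moments. Together with the local Lipschitz property \ref{A:A2} and the boundedness of $\eta$, this places \eqref{eq:A.1} within the scope of the classical existence results of \citet{Kobylanski2000AP} (after a truncation in $V$ followed by a stability argument driven by the forthcoming a priori estimates) or directly of \citet{BriandHu2008PTRF}, yielding a solution $(Y,Z)$ with $Y$ of all exponential moments and $Z\in\hcal^p(\R^{1\times d})$ for every $p\ge 1$.

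The core of the proof is \eqref{eq:A.2}. Apply It\^o--Tanaka to $e^{\gamma|Y_t|}$ (equivalently, dominate $|Y|$ by the scalar quadratic BSDE with terminal $|\eta|$ and driver $a_s+\frac{\gamma}{2}|z|^2$, then linearize via $\tilde Y\mapsto e^{\gamma\tilde Y}$) to obtain
\[
e^{\gamma|Y_t|}\;\le\;\E_t\!\left[\exp\!\Bigl(\gamma|\eta|+\gamma\jf a_s\,ds\Bigr)\right].
\]
Pull out the essentially-bounded contributions $\gamma\|\eta\|_\infty$, $\gamma\|\int_0^T\alpha_s\,ds\|_\infty$, and $\gamma\phi(\|U\|_{\s^\infty_{[t,T]}})(T-t)$. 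The remaining task is to bound $\E_t[\exp(\gamma n\lambda\jf|V_s|^{1+\delta}\,ds)]$. The key step is Young's inequality in the form
\[
\gamma n\lambda\,|V|^{1+\delta}\;\le\;\tfrac{1+\delta}{2}\nu\,|V|^2\;+\;\tfrac{1-\delta}{2}(\gamma n\lambda)^{\frac{2}{1-\delta}}\nu^{-\frac{1+\delta}{1-\delta}}
\]
with the calibrated choice $\nu:=\bigl((1+\delta)\|V\|_{{\rm BMO}_{[t,T]}}^2\bigr)^{-1}$, so that $\tfrac{1+\delta}{2}\nu\|V\|_{\rm BMO}^2=\tfrac12$. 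The John--Nirenberg estimate for BMO martingales (cf.\ \citet{Kazamaki1994book}) then gives $\E_t[\exp(\tfrac{1}{2\|V\|_{\rm BMO}^2}\jf|V_s|^2\,ds)]\le 2$. Dividing by $\gamma$ after taking the logarithm isolates the summand $\tfrac{1}{\gamma}\ln 2$ and produces exactly $C_{\delta,\lambda,n}\gamma^{(1+\delta)/(1-\delta)}\|V\|_{{\rm BMO}_{[t,T]}}^{2(1+\delta)/(1-\delta)}(T-t)$, which is \eqref{eq:A.2}; in particular $Y\in\s^\infty$.

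For \eqref{eq:A.3}, apply It\^o's formula to $e^{2\gamma|Y_t|}$ (via Tanaka, the local-time contribution has a favourable sign); using \ref{A:A1} and absorbing $\gamma^2 e^{2\gamma|Y|}|Z|^2$ from both sides yields, after localisation,
\[
\gamma^2\int_\tau^T e^{2\gamma|Y_s|}|Z_s|^2\,ds\;\le\;e^{2\gamma|\eta|}+2\gamma\int_\tau^T e^{2\gamma|Y_s|}a_s\,ds\;-\;\textrm{martingale}.
\]
Using $e^{2\gamma|Y_s|}\ge 1$ on the left and $e^{2\gamma|Y_s|}\le\exp(2\gamma\|\!\sup_{s\in[t,T]}\!|Y_s|\|_\infty)$ on the right, taking $\E_\tau$, and treating the $n\lambda|V|^{1+\delta}$ contribution by the analogous Young inequality with the same calibration $\tfrac{1+\delta}{2}\nu=(2\|V\|_{{\rm BMO}_{[t,T]}}^2)^{-1}$ (so that $\tfrac{1+\delta}{2}\nu\,\E_\tau\!\int_\tau^T|V_s|^2\,ds\le 1/2$) reproduces precisely the right-hand side of \eqref{eq:A.3}. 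Uniqueness is routine: the difference of two bounded solutions satisfies a linear BSDE whose coefficient in front of the $Z$-difference is BMO by \ref{A:A2}, and Girsanov's transform with zero terminal condition forces the difference to vanish. The principal technical point is the calibration of $\nu$ in Young's inequality, which must simultaneously activate John--Nirenberg (giving the factor $2$, hence the $\tfrac{1}{\gamma}\ln 2$) and deliver the sharp constant $C_{\delta,\lambda,n}$ with the correct exponents on $\gamma$ and $\|V\|_{\rm BMO}$; all other steps are careful bookkeeping.
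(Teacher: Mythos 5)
Your proposal is correct and follows essentially the same route as the paper's proof: existence via \citet{BriandHu2008PTRF} after establishing exponential integrability of $\int_t^T|V_s|^{1+\delta}\,{\rm d}s$ through the calibrated Young inequality combined with John--Nirenberg (which is exactly the source of the factor $2$ and hence of the $\tfrac1\gamma\ln 2$ term), then It\^o--Tanaka on $e^{2\gamma|Y|}$ for the $Z$-estimate, and Girsanov for uniqueness. The bookkeeping of the exponents on $\gamma$ and $\|V\|_{\rm BMO}$ matches the paper's computation.
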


\begin{proof} Since $V\in {{\rm BMO}}(\R^{n\times d})$, it follows from Young's inequality that for each real $k>0$,
\begin{equation}\label{eq:A.5}
\begin{array}{lll}
\Dis kn\lambda |V_s|^{1+\delta} &=& \Dis \left( (1+\delta)^{\frac{1+\delta}{1-\delta}}\|V\|_{{\rm BMO}_{[t,T]}}^{2\frac{1+\delta}{1-\delta}} (kn\lambda)^{\frac{2}{1-\delta}} \right)^{\frac{1-\delta}{2}}\left(\frac{|V_s|^2}{(1+\delta)\|V\|_{{\rm BMO}_{[t,T]}}^2} \right)^{\frac{1+\delta}{2}}\vspace{0.2cm}\\
&\leq & \Dis \frac{1}{2\|V\|_{{\rm BMO}_{[t,T]}}^2}|V_s|^2+ k^{\frac{2}{1-\delta}} C_{\delta,\lambda,n} \|V\|_{{\rm BMO}_{[t,T]}}^{2\frac{1+\delta}{1-\delta}},\ \ \  0\leq t\leq s\leq T,
\end{array}
\end{equation}
where the constant $C_{\delta,\lambda,n}$ is defined in \eqref{eq:A.4}. On the other hand, it follows from John-Nirenberg inequality for BMO martingale (see for example Lemma A.1 in \citet{HuTang2016SPA}) that for each $t\in\T$ and each stopping time $\tau$ with values in $[t,T]$,
\begin{equation}\label{eq:A.6}
\E_\tau\left[\exp\left(\frac{1}{2\|V\|_{{\rm BMO}_{[t,T]}}^2}\int_\tau^T |V_s|^2{\rm d}s\right)\right]\leq \frac{1}{1-{1\over 2}}=2,\ \ t\in \T.
\end{equation}
Thus, combining previous inequality and inequality \eqref{eq:A.5} with $k:=p\gamma$ yields that for each $p\geq 1$ and $t\in\T$,
\begin{equation}\label{eq:A.7}
\E_t\left[\exp\left(p\gamma n\lambda\int_t^T |V_s|^{1+\delta}{\rm d}s\right)\right]\leq 2\exp\left( (p\gamma)^{\frac{2}{1-\delta}} C_{\delta,\lambda,n} \|V\|_{{\rm BMO}_{[t,T]}}^{2\frac{1+\delta}{1-\delta}}(T-t)\right)<+\infty,
\end{equation}
and then, in view of $U\in \s^{\infty}(\R^n)$ and the boundedness of $|\eta|$ and $\int_0^T \alpha_s{\rm d}s$,
$$
\begin{array}{ll}
&\Dis \E_t\left[\exp\left(p\gamma |\eta|+p\gamma\int_t^T \left(\alpha_s+\phi(|U_s|)+n\lambda |V_s|^{1+\delta}\right){\rm d}s\right)\right]\vspace{0.1cm}\\
\leq &\Dis 2\exp\left(p\gamma \|\eta\|_{\infty}+p\gamma\left\|\int_0^T \alpha_s{\rm d}s\right\|_{\infty}+p\gamma \phi\left(\|U\|_{\s^{\infty}_{[t,T]}}\right)(T-t)\right.\vspace{0.1cm}\\
&\Dis \left. \hspace{1.2cm}+(p\gamma)^{\frac{2}{1-\delta}} C_{\delta,\lambda,n} \|V\|_{{\rm BMO}_{[t,T]}}^{2\frac{1+\delta}{1-\delta}}(T-t)\right)<+\infty.
\end{array}
$$
In view of assumptions \ref{A:A1}-\ref{A:A2} and  with previous inequality in the hand, we can apply Theorem 2 in \citet{BriandHu2008PTRF} to see that BSDE \eqref{eq:A.1}
admits a solution $(Y,Z)$ such that
$$
\E\left[\int_0^T |Z_s|^2{\rm d}s\right]<+\infty
$$
and
$$
\begin{array}{lll}
\Dis \exp\left(\gamma |Y_t|\right)
&\leq &\Dis 2\exp\left(\gamma \|\eta\|_{\infty}+\gamma\left\|\int_0^T \alpha_s{\rm d}s\right\|_{\infty}+\gamma \phi\left(\|U\|_{\s^{\infty}_{[t,T]}}\right)(T-t)\right.\vspace{0.1cm}\\
&&\Dis \hspace{1.2cm}\left. +\gamma^{\frac{2}{1-\delta}} C_{\delta,\lambda,n} \|V\|_{{\rm BMO}_{[t,T]}}^{2\frac{1+\delta}{1-\delta}}(T-t)\right).
\end{array}
$$
This shows that \eqref{eq:A.2} holds and $Y$ is bounded.\vspace{0.2cm}

We now show that \eqref{eq:A.3} holds and $Z\cdot B$ is a BMO martingale. Using It\^{o}-Tanaka's formula to compute $\exp(2\gamma |Y_t|)$ and utilizing assumption \ref{A:A1}, we have, for each $t\in\T$ and each stopping time $\tau$ with values in $[t,T]$,
$$
\begin{array}{ll}
&\Dis \exp(2\gamma |Y_\tau|)+2\gamma^2\E_\tau\left[\int_\tau^T \exp(2\gamma |Y_s|) |Z_s|^2{\rm d}s\right]\vspace{0.1cm}\\
\leq &\Dis \E_\tau\left[\exp(2\gamma |\eta|)\right]+2\gamma\E_\tau\left[\int_\tau^T \exp(2\gamma |Y_s|)\left( \alpha_s+\phi(|U_s|)+n\lambda |V_s|^{1+\delta}+\frac{\gamma}{2}|Z_s|^2\right){\rm d}s\right].
\end{array}
$$
Therefore, in view of \eqref{eq:A.5} with $k:=1$,
$$
\begin{array}{lll}
&&\Dis \gamma^2\E_\tau\left[\int_\tau^T |Z_s|^2{\rm d}s\right]\vspace{0.1cm}\\
&\leq &\Dis \exp(2\gamma \|\eta\|_\infty)+2\gamma\exp\left(2\gamma \left\|\sup_{s\in [t,T]}|Y_s|\right\|_\infty\right)\vspace{0.1cm}\\
&& \cdot \Dis\left(\left\|\int_0^T \alpha_s {\rm d}s\right\|_{\infty}+\phi\left(\|U\|_{\s^{\infty}_{[t,T]}}\right)(T-t)+{1\over 2}+C_{\delta,\lambda,n} \|V\|_{{\rm BMO}_{[t,T]}}^{2\frac{1+\delta}{1-\delta}}(T-t)\right)<+\infty,
\end{array}
$$
from which the desired conclusions follows immediately.

Finally, in view of assumption \ref{A:A2} with $(U,V)\in \s^{\infty}(\R^n)\times {{\rm BMO}}(\R^{n\times d})$, by a similar argument to that in \citet[ Lemma 2.1 ]{HuTang2016SPA},  we can use the Girsanov transform to prove a comparison result on the solutions of BSDE \eqref{eq:A.1}, which yields the desired uniqueness.
\end{proof}

\begin{lem}\label{lem:A.2}
Assume that $(U,V)\in \s^\infty(\R^n)\times {\rm BMO}(\R^{n\times d})$, both $|\eta|$ and $\int_0^T \alpha_t{\rm d}t$ are (essentially) bounded, and $(Y,Z)$ is a solution of BSDE \eqref{eq:A.1} such that $Y$ is (essentially) bounded.

(i) If $\as$, it holds that
    \begin{equation}\label{eq:A.8}
    {\rm sgn} (Y_t(\omega))f\left(\omega,t,Z_t(\omega)\right)\leq \alpha_t(\omega)+\beta |U_t(\omega)|+\lambda |V_t(\omega)|^{1+\delta}+\frac{\gamma}{2} |Z_t(\omega)|^2,
    \end{equation}
    then for each $t\in\T$, we have
    $$
    \begin{array}{lll}
    \Dis \exp\left(\gamma |Y_t|\right)&\leq &\Dis \E_t\left[\exp\left(\gamma \|\eta\|_{\infty}+\gamma\left\|\int_0^T \alpha_s {\rm d}s\right\|_{\infty}\right.\right.\vspace{0.2cm}\\
    &&\Dis \hspace{1.5cm}\left.\left.+\beta\gamma \|U\|_{\s^{\infty}_{[t,T]}}(T-t)+\lambda\gamma \int_t^T |V_s|^{1+\delta} {\rm d}s\right)\right];
    \end{array}
    $$

(ii) If $\as$, it holds that
    \begin{equation}\label{eq:A.9}
    f\left(\omega,t,Z_t(\omega)\right)\geq \frac{\bar \gamma}{2} |Z_t(\omega)|^2-\alpha_t(\omega)-\beta |U_t(\omega)|-\lambda |V_t(\omega)|^{1+\delta}
    \end{equation}
    or
    \begin{equation}\label{eq:A.10}
    f\left(\omega,t,Z_t(\omega)\right)\leq -\frac{\bar \gamma}{2} |Z_t(\omega)|^2+\alpha_t(\omega)+\beta |U_t(\omega)|+\lambda |V_t(\omega)|^{1+\delta},\vspace{0.1cm}
    \end{equation}
    then for each $\eps\in (0,\frac{\bar\gamma}{9}]$ and $t\in \T$, we have
    $$
    \begin{array}{lll}
    \Dis \E_t\left[\exp\left(\frac{\bar \gamma}{2}\eps\int_t^T |Z_s|^2{\rm d}s \right)\right]&\leq & \Dis\E_t\left[\exp\left(6\eps\left\|\sup_{s\in [t,T]}|Y_s|\right\|_{\infty}+3\eps\left\|\int_0^T \alpha_s {\rm d}s\right\|_{\infty} \right.\right.\vspace{0.2cm}\\
    &&\Dis \hspace{1.4cm}\left.\left.+3\eps\beta \|U\|_{\s^{\infty}_{[t,T]}}(T-t)+3\eps\lambda \int_t^T |V_s|^{1+\delta} {\rm d}s\right)\right].
    \end{array}
    $$
\end{lem}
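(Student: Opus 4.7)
My plan is to handle parts (i) and (ii) separately, using in both cases an It\^o/It\^o--Tanaka computation to convert the one-sided hypotheses on $f$ into (super)martingale-style inequalities.

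For part (i), I apply It\^o--Tanaka to $\exp(\gamma|Y_s|)$. Writing $d|Y_s|=\mathrm{sgn}(Y_s)\,dY_s+dL^0_s$ with non-negative local time $L^0$ and inserting the BSDE \eqref{eq:A.1}, I obtain
$$
d\,e^{\gamma|Y_s|}=\gamma e^{\gamma|Y_s|}\!\left[-\mathrm{sgn}(Y_s)f(s,Z_s)+\tfrac{\gamma}{2}|Z_s|^2\right]\!ds+\gamma e^{\gamma|Y_s|}\mathrm{sgn}(Y_s)Z_s\,dB_s+\gamma e^{\gamma|Y_s|}\,dL^0_s.
$$
Hypothesis \eqref{eq:A.8} gives $-\gamma\,\mathrm{sgn}(Y_s)f(s,Z_s)\geq-\gamma(\alpha_s+\beta|U_s|+\lambda|V_s|^{1+\delta})-\tfrac{\gamma^2}{2}|Z_s|^2$, so the two $|Z|^2$ terms cancel; dropping the non-negative $dL^0$-contribution shows that $R_s\,e^{\gamma|Y_s|}$, with $R_s:=\exp\!\bigl(\gamma\!\int_t^s(\alpha_u+\beta|U_u|+\lambda|V_u|^{1+\delta})du\bigr)$, is a local submartingale on $[t,T]$. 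Localising via $\tau_n:=\inf\{s\geq t:\int_t^s|Z_u|^2du\geq n\}\wedge T$ turns the stochastic integral into a true martingale (since $e^{\gamma|Y|}$ is bounded and $Z\in\hcal^2$), and dominated convergence as $n\to\infty$ (the dominator is integrable because $Y$ is essentially bounded, $V\in\mathrm{BMO}$, and Young's inequality plus John--Nirenberg supply exponential moments for $\int|V|^{1+\delta}$) yields $e^{\gamma|Y_t|}\leq\E_t[e^{\gamma|\eta|}R_T]$. Bounding the deterministic pieces in $R_T$ by their essential suprema produces the estimate in (i).

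For part (ii) I treat the lower-bound case \eqref{eq:A.9}; case \eqref{eq:A.10} reduces to it via the involution $(Y,Z,f(s,z))\mapsto(-Y,-Z,-f(s,-z))$, under which $|Y|,|Z|,|U|,|V|$ are unchanged. From the BSDE and \eqref{eq:A.9} I get the pathwise bound
$$
\tfrac{\bar\gamma}{2}\int_t^T|Z_s|^2\,ds\leq 2\left\|\sup_{s\in[t,T]}|Y_s|\right\|_\infty+\Psi_0+\int_t^T Z_s\,dB_s,\qquad \Psi_0:=\int_t^T\bigl(\alpha_s+\beta|U_s|+\lambda|V_s|^{1+\delta}\bigr)ds.
$$
Multiplying by $3\epsilon$, exponentiating, and substituting $\exp(3\epsilon\!\int_t^T Z\,dB)=\mathcal{E}(3\epsilon Z\cdot B)_{t,T}\exp(\tfrac{9\epsilon^2}{2}\!\int_t^T|Z|^2ds)$, I rearrange to
$$
\exp\!\Bigl(\tfrac{3\epsilon(\bar\gamma-3\epsilon)}{2}\int_t^T|Z_s|^2ds\Bigr)\leq\exp\!\bigl(6\epsilon\|\sup_{s\in[t,T]}|Y_s|\|_\infty+3\epsilon\Psi_0\bigr)\,\mathcal{E}(3\epsilon Z\cdot B)_{t,T}.
$$
For $\epsilon\in(0,\bar\gamma/9]$ the elementary inequality $3\epsilon(\bar\gamma-3\epsilon)\geq 2\bar\gamma\epsilon$ holds, so the exponent on the left can be replaced by $\bar\gamma\epsilon\!\int_t^T|Z|^2ds$. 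Taking square roots, applying $\E_t$ together with Cauchy--Schwarz, and using $\E_t[\mathcal{E}(3\epsilon Z\cdot B)_{t,T}]\leq 1$ (non-negative local martingale $\Rightarrow$ supermartingale) yields
$$
\E_t\!\left[\exp\!\bigl(\tfrac{\bar\gamma\epsilon}{2}\textstyle\int_t^T|Z|^2ds\bigr)\right]\leq\bigl(\E_t[\exp(6\epsilon\|\sup_{s\in[t,T]}|Y_s|\|_\infty+3\epsilon\Psi_0)]\bigr)^{1/2}.
$$
Since $\Psi_0\geq 0$, the right-hand integrand is pointwise $\geq 1$ and so is its conditional expectation, whence $\sqrt y\leq y$ for $y\geq 1$ removes the square root. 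Finally, pulling the deterministic bounds $\|\!\int_0^T\!\alpha_s\,ds\|_\infty$ and $\beta\|U\|_{\s^\infty_{[t,T]}}(T-t)$ out of $\E_t$ produces exactly the claimed estimate.

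The main technical obstacle is the rigorous upgrade from local to true (super)martingales together with the ensuing interchange with $\E_t$. In part (i) the stochastic integral $\int\gamma e^{\gamma|Y|}\mathrm{sgn}(Y)Z\,dB$ is only a priori a local martingale, and in part (ii) $\mathcal{E}(3\epsilon Z\cdot B)$ is only a supermartingale; both require a localisation through stopping times $\tau_n$ based on $\int|Z|^2$, combined with the essential boundedness of $Y$ and with the exponential integrability of $\int|V|^{1+\delta}$ (derived from $V\in\mathrm{BMO}$ via John--Nirenberg and Young's inequality absorbing $|V|^{1+\delta}$ into $|V|^2$) to dominate the limits. The clever algebraic ingredient of part (ii) is the choice of multiplier $3\epsilon$: combined with the threshold $\epsilon\leq\bar\gamma/9$, it is precisely what produces the coefficients $6\epsilon$ and $3\epsilon$ on the right-hand side of the claim.
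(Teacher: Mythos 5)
Your proof is correct and follows essentially the same route as the paper: part (i) is exactly the It\^o--Tanaka submartingale computation that the paper performs on $\exp\bigl(\gamma|Y_t|+\gamma\int_0^t(\alpha_s+\beta|U_s|+\lambda|V_s|^{1+\delta})\,{\rm d}s\bigr)$, and part (ii) reconstructs, with precisely the constants $6\eps$, $3\eps$ and the threshold $\eps\le\bar\gamma/9$ appearing in the statement, the stochastic-exponential/Cauchy--Schwarz argument that the paper imports from \citet[Proposition 2]{FanHuTang2019ArXiv}. The localisation and integrability issues you flag are exactly those settled by \eqref{eq:A.7}, so no gap remains.
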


\begin{proof}
In view of \eqref{eq:A.7}, using It\^{o}-Tanaka's formula to compute $$\exp\left(\gamma |Y_t|+\gamma\int_0^t \left(\alpha_s(\omega)+\beta |U_s(\omega)|+\lambda |V_s(\omega)|^{1+\delta}\right){\rm d}s\right)$$
one can easily obtain (i). And, in view of \eqref{eq:A.7} again, we can
apply a similar argument as in the proof of   \citet[Proposition 2]{FanHuTang2019ArXiv} to get (ii). The detailed proof is omitted here.
\end{proof}

The following two lemmas provide some bounds on the (possibly unbounded) solutions of one-dimensional quadratic BSDEs, which can be derived from (i) of \citet[Proposition 1]{FanHuTang2019ArXiv}. We omit the detailed proof here.

\begin{lem}\label{lem:A.3}
Assume that there exists an $(\F_t)$-progressively measurable scalar-valued non-negative process $(\bar \alpha_t)_{t\in\T}$ such that $\as$,
$$
\RE\ z\in \R^{1\times d},\ \ \ |f(\omega,t,z)|\leq \bar \alpha_t(\omega)+\frac{\gamma}{2} |z|^2.
$$
Then, for any solution $(Y,Z)$ of BSDE \eqref{eq:A.1} satisfying
$$
\E\left[\exp\left(2\gamma\sup_{t\in \T}|Y_t|+2\gamma\int_0^T \bar \alpha_s{\rm d}s\right)\right]<+\infty,
$$
we have
$$
\exp\left(\gamma |Y_t|\right)\leq \E_t\left[\exp\left(\gamma |\eta|+\gamma\int_t^T \bar \alpha_s{\rm d}s\right)\right],\ \ t\in\T
$$
and
$$
\E\left[\int_0^T |Z_s|^2{\rm d}s\right]\leq {1\over \gamma^2} \E\left[\exp\left(2\gamma |\eta|+2\gamma\int_0^T \bar \alpha_s{\rm d}s\right)\right].\vspace{0.3cm}
$$
\end{lem}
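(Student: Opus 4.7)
The plan is to apply Tanaka's formula combined with It\^{o}'s formula to exponential functionals of $|Y|$, exploit the quadratic growth $|f(t,z)|\leq \bar\alpha_t+(\gamma/2)|z|^2$ to unveil a nonnegative drift, and then use the standing exponential-moment hypothesis to upgrade local (sub)martingales to true ones via dominated convergence. This mirrors the strategy behind Proposition~1(i) of \citet{FanHuTang2019ArXiv}.

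For the first inequality, I introduce $N_t:=\exp\bigl(\gamma|Y_t|+\gamma\int_0^t\bar\alpha_s\,{\rm d}s\bigr)$. Tanaka's formula applied to $|Y|$ together with BSDE \eqref{eq:A.1} gives
$${\rm d}|Y_t|=-{\rm sgn}(Y_t)f(t,Z_t)\,{\rm d}t+{\rm sgn}(Y_t)Z_t\,{\rm d}B_t+{\rm d}L_t,$$
with $L$ the nondecreasing local time of $Y$ at $0$. It\^{o}'s formula applied to the smooth exponential composed with the continuous semimartingale $|Y_t|+\int_0^t\bar\alpha_s\,{\rm d}s$ then yields
$${\rm d}N_t=\gamma N_t\bigl[\bar\alpha_t-{\rm sgn}(Y_t)f(t,Z_t)+\tfrac{\gamma}{2}|Z_t|^2\bigr]\,{\rm d}t+\gamma N_t\,{\rm d}L_t+\gamma N_t\,{\rm sgn}(Y_t)Z_t\,{\rm d}B_t.$$
The bracket is nonnegative since $-{\rm sgn}(Y_t)f(t,Z_t)\geq -|f(t,Z_t)|\geq -\bar\alpha_t-(\gamma/2)|Z_t|^2$, and ${\rm d}L_t\geq 0$, so $N$ is a local submartingale. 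The hypothesis delivers $\sup_{t\in\T}N_t\leq\exp\bigl(\gamma\sup_{t\in\T}|Y_t|+\gamma\int_0^T\bar\alpha_s\,{\rm d}s\bigr)\in L^2$, so along a reducing sequence of stopping times the dominated convergence theorem gives $N_t\leq\E_t[N_T]$, and dividing by $\exp(\gamma\int_0^t\bar\alpha_s\,{\rm d}s)$ produces the first claim.

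For the $Z$-estimate, I apply Tanaka and It\^{o} to $\exp(2\gamma|Y_t|)$. The It\^{o} second-order term gives $2\gamma^2 e^{2\gamma|Y_t|}|Z_t|^2\,{\rm d}t$; bounding ${\rm sgn}(Y_s)f(s,Z_s)\leq \bar\alpha_s+(\gamma/2)|Z_s|^2$ cancels $\gamma^2 e^{2\gamma|Y_s|}|Z_s|^2$ against half of the It\^{o} term, while the nonpositive pieces $-e^{2\gamma|Y_0|}$ and $-2\gamma\int_0^T e^{2\gamma|Y_s|}\,{\rm d}L_s$ can be dropped, leaving
$$\gamma^2\int_0^T e^{2\gamma|Y_s|}|Z_s|^2\,{\rm d}s\leq e^{2\gamma|\eta|}+2\gamma\int_0^T e^{2\gamma|Y_s|}\bar\alpha_s\,{\rm d}s-2\gamma\int_0^T e^{2\gamma|Y_s|}{\rm sgn}(Y_s)Z_s\,{\rm d}B_s.$$
Localizing the stochastic integral by $\tau_n:=\inf\{t:\int_0^t|Z_s|^2\,{\rm d}s\geq n\}\wedge T$ turns it into a true martingale with zero expectation, and Fatou on the left together with dominated convergence on the right yield $\gamma^2\E\int_0^T|Z_s|^2\,{\rm d}s\leq \E[e^{2\gamma|\eta|}]+2\gamma\E\int_0^T e^{2\gamma|Y_s|}\bar\alpha_s\,{\rm d}s$. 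Combining the first estimate with the conditional Cauchy--Schwarz inequality $(\E_s[X])^2\leq \E_s[X^2]$ produces $e^{2\gamma|Y_s|}\leq\E_s\bigl[\exp(2\gamma|\eta|+2\gamma\int_s^T\bar\alpha_u\,{\rm d}u)\bigr]$; Fubini and the elementary identity $\int_0^T 2\gamma\bar\alpha_s\exp(2\gamma\int_s^T\bar\alpha_u\,{\rm d}u)\,{\rm d}s=\exp(2\gamma\int_0^T\bar\alpha_u\,{\rm d}u)-1$ then collapse $\E[e^{2\gamma|\eta|}]+2\gamma\E\int_0^T e^{2\gamma|Y_s|}\bar\alpha_s\,{\rm d}s$ into $\E\bigl[\exp(2\gamma|\eta|+2\gamma\int_0^T\bar\alpha_s\,{\rm d}s)\bigr]$, which is the desired bound.

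The main technical obstacle is the integrability bookkeeping in the unbounded-$Y$ regime: without BMO technology the stochastic integral in the $Z$-identity is a priori only a local martingale, so I must carefully choose reducing stopping times and transfer the pathwise inequality to the limit using the standing exponential-moment hypothesis, Fatou's lemma for the lower-semicontinuous left-hand side, and the conditional Cauchy--Schwarz step that bridges the $\gamma$-moment bound on $Y$ to the $2\gamma$-moment bound needed for the $Z$-estimate.
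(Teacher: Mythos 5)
Your proof is correct and follows the standard exponential-transform route that the paper itself outsources to \citet[Proposition 1(i)]{FanHuTang2019ArXiv} without giving details: It\^o--Tanaka on $\exp(\gamma|Y|)$ and $\exp(2\gamma|Y|)$, localization, Fatou/dominated convergence, and the conditional Cauchy--Schwarz plus Fubini step to absorb the $\bar\alpha$ integral into the final bound. One small repair: the stopping times $\tau_n:=\inf\{t:\int_0^t|Z_s|^2\,{\rm d}s\geq n\}\wedge T$ do not by themselves make the stopped integral $\int_0^{\cdot\wedge\tau_n}e^{2\gamma|Y_s|}\,{\rm sgn}(Y_s)Z_s\,{\rm d}B_s$ a true martingale, since $e^{4\gamma|Y|}$ need not be integrable under the standing $2\gamma$-moment hypothesis; take instead $\tau_n:=\inf\{t:\int_0^t e^{4\gamma|Y_s|}|Z_s|^2\,{\rm d}s\geq n\}\wedge T$ (finite a.s.\ because $Y$ is continuous and $\int_0^T|Z_s|^2\,{\rm d}s<\infty$), after which the localized identity has terminal term $e^{2\gamma|Y_{\tau_n}|}$, handled exactly by the dominated convergence you invoke.
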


\begin{lem}\label{lem:A.4}
Assume that there exists an $(\F_t)$-progressively measurable scalar-valued non-negative process $(\bar \alpha_t)_{t\in\T}$ such that $\as$,
$$
\RE\ z\in \R^{1\times d},\ \ \ f(\omega,t,z)\leq \bar \alpha_t(\omega)+\frac{\gamma}{2} |z|^2.
$$
Then, for any solution $(Y,Z)$ of BSDE \eqref{eq:A.1} satisfying
$$
\E\left[\exp\left(2\gamma\sup_{t\in \T}Y_t^+ +2\gamma\int_0^T \bar \alpha_s{\rm d}s\right)\right]<+\infty,
$$
we have
$$
\exp\left(\gamma Y_t^+\right)\leq \E_t\left[\exp\left(\gamma \eta^+ +\gamma\int_t^T \bar \alpha_s{\rm d}s\right)\right],\ \ t\in\T.
$$
\end{lem}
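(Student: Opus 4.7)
The plan is to show that the process
$$N_t := \exp\!\Bigl(\gamma Y_t^+ + \gamma \!\int_0^t \bar\alpha_s\, {\rm d}s\Bigr)$$
is a submartingale under the stated integrability assumption; the conclusion then follows immediately by writing $N_t \le \E_t[N_T]$ and dividing out the $\F_t$-measurable factor $\exp(\gamma\int_0^t \bar\alpha_s\,{\rm d}s)$. Since the conclusion and hypothesis are one-sided, I would only keep track of the positive part, essentially halving the argument of \cref{lem:A.3}.

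First I would apply the It\^{o}--Tanaka formula to the convex function $\Phi(y):=\exp(\gamma y^+)$. Its left-derivative is $\Phi'_-(y)=\gamma e^{\gamma y}{\bf 1}_{y>0}$ and its distributional second derivative is $\gamma^{2}e^{\gamma y}{\bf 1}_{y>0}\,{\rm d}y+\gamma\,\delta_{0}({\rm d}y)$, so (writing $L^0$ for the local time of $Y$ at $0$)
$$
{\rm d}\Phi(Y_s)=\gamma e^{\gamma Y_s}{\bf 1}_{Y_s>0}\bigl(-f(s,Z_s){\rm d}s+Z_s{\rm d}B_s\bigr)+\tfrac{\gamma^2}{2}e^{\gamma Y_s}{\bf 1}_{Y_s>0}|Z_s|^2{\rm d}s+\tfrac{\gamma}{2}{\rm d}L_s^{0}.
$$
Using the one-sided bound $f(s,Z_s)\le \bar\alpha_s+\tfrac{\gamma}{2}|Z_s|^2$, the quadratic $|Z_s|^2$-terms cancel and $-f+\tfrac{\gamma}{2}|Z|^2\ge -\bar\alpha_s$, yielding
$$
{\rm d}\Phi(Y_s)\ge -\gamma\bar\alpha_s\,\Phi(Y_s){\bf 1}_{Y_s>0}\,{\rm d}s+\gamma e^{\gamma Y_s}{\bf 1}_{Y_s>0}Z_s\,{\rm d}B_s+\tfrac{\gamma}{2}{\rm d}L_s^{0}.
$$
(If working with local time is unwelcome, one can instead approximate $y\mapsto y^+$ by a smooth convex sequence $h_n\uparrow y^+$ with $0\le h_n'\uparrow {\bf 1}_{y>0}$ and $h_n''\ge 0$, apply the classical It\^{o} formula to $\exp(\gamma h_n(Y_t))$, drop the non-negative $h_n''$ term, and pass to the limit.)

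Next, I would multiply by the $\F_t$-adapted integrating factor $\Lambda_t:=\exp(\gamma\int_0^t\bar\alpha_u\,{\rm d}u)$ and apply the product rule. Since ${\rm d}\Lambda_s=\gamma\bar\alpha_s\Lambda_s\,{\rm d}s$ and $\Phi(Y_s){\bf 1}_{Y_s>0}\le \Phi(Y_s)$, the two ${\rm d}s$-drifts combine to a non-negative quantity, yielding
$$
{\rm d}(\Lambda_s\Phi(Y_s))\ge \Lambda_s\,\gamma e^{\gamma Y_s}{\bf 1}_{Y_s>0}Z_s\,{\rm d}B_s+\tfrac{\gamma}{2}\Lambda_s\,{\rm d}L_s^{0}.
$$
Thus $N_t=\Lambda_t\Phi(Y_t)$ is a local submartingale on $\T$.

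To upgrade from local to true submartingale, I would invoke the hypothesis $\E[\exp(2\gamma\sup_{t\in\T}Y_t^+ +2\gamma\int_0^T\bar\alpha_s\,{\rm d}s)]<+\infty$, which gives
$$
\sup_{t\in\T}N_t\;\le\;\exp\!\Bigl(\gamma\sup_{t\in\T}Y_t^+ +\gamma\!\int_0^T\bar\alpha_s\,{\rm d}s\Bigr)\in L^{2}(\mathbb{P}).
$$
Hence $N$ is dominated by an integrable random variable; choosing a localizing sequence $\tau_n\uparrow T$ for the stochastic-integral part, applying the submartingale property to $N_{\cdot\wedge\tau_n}$, and passing to the limit with dominated convergence gives $N_t\le \E_t[N_T]$ for every $t\in\T$. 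Rewriting this inequality is precisely the claim. The main technical point is handling the non-smoothness of $y\mapsto y^+$ at the origin; all subsequent steps (cancellation of the quadratic part, integrating factor, uniform integrability) are essentially forced once the It\^{o}--Tanaka computation is in place.
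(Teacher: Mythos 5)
Your argument is correct. The paper gives no proof of \cref{lem:A.4} at all---it simply says the bound ``can be derived from (i) of \citet[Proposition 1]{FanHuTang2019ArXiv}'' and omits the details---and your direct It\^o--Tanaka/submartingale argument is exactly the standard proof behind such an estimate; it is the same technique the paper itself sketches for \cref{lem:A.2} and \cref{lem:A.3} (apply It\^o--Tanaka to an exponential of the solution, use the one-sided quadratic bound to cancel the $|Z|^2$ terms, absorb $\bar\alpha$ into an integrating factor, and upgrade the resulting local submartingale using the $L^2$ domination supplied by the exponential-moment hypothesis). The only place requiring a word of caution is your parenthetical smoothing alternative: with $0\le h_n'\le 1$ the quadratic terms no longer cancel exactly and leave a residual $-\tfrac{\gamma^2}{2}h_n'(1-h_n')\,e^{\gamma h_n(Y_s)}|Z_s|^2$ of the wrong sign, which must be dispatched by a limiting argument (it vanishes as $n\to\infty$ since $\int_0^T \mathbf{1}_{\{Y_s=0\}}|Z_s|^2\,{\rm d}s=0$); your primary route via the local time at $0$ avoids this entirely and is complete as written.
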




\setlength{\bibsep}{2pt}

\def\cprime{$'$}

\end{document}